\newtheorem{thm}{Theorem}[section]
\newtheorem{defn}[thm]{Definition}
\newtheorem{lem}[thm]{Lemma}
\newtheorem{cor}[thm]{Corollary}
\newtheorem{rem}[thm]{Remark}
\newtheorem{prop}[thm]{Proposition}
\newtheorem{theorem}{Theorem}[section]
\newtheorem{lemma}[theorem]{Lemma}
\begin{document}

\title{Renormalized solutions for stochastic $p$-Laplace equations with $L^1$-initial data: The multiplicative case}
\author{Niklas Sapountzoglou \thanks{Faculty of Mathematics, University of Duisburg-Essen, Thea-Leymann-Str.9, 45127 Essen, Germany \href{mailto:niki_sapo@msn.com}{\texttt{niki\_sapo@msn.com}}}\and  Aleksandra Zimmermann \thanks{Faculty of Mathematics, University of Duisburg-Essen, Thea-Leymann-Str.9, 45127 Essen, Germany \href{mailto:aleksandra.zimmermann@uni-due.de}{\texttt{aleksandra.zimmermann@uni-due.de}}}}
\date{}

\maketitle

\begin{abstract}
We consider a $p$-Laplace evolution problem with multiplicative noise on a bounded domain $D\subset\mathbb{R}^d$ with homogeneous Dirichlet boundary conditions for $1<p<\infty$. The random initial data is merely integrable. Consequently, the key estimates are available with respect to truncations of the solution. We introduce the notion of renormalized solutions for multiplicative stochastic $p$-Laplace equations with $L^1$-initial data and study existence and uniqueness of solutions in this framework.
\end{abstract}

\section{Introduction}
\subsection{Motivation of the study}
In the theory of stochastic partial differential equations (SPDEs), square integrability of the initial data is a rather technical assumption. For square integrable initial data $u_0$, the stochastic $p$-Laplace evolution problems can be solved with classical methods for nonlinear, monotone SPDEs (see, e.g. \cite{EP}, \cite{GDPJZ}, \cite{WLMR} and \cite{Br} for systems). In applications, one often has flawed or irregular data and therefore it may be reasonable to study more general, merely integrable random initial conditions. From the results of \cite{DB}, \cite{BM97} and \cite{DBFMHR} it is well known that the deterministic $p$-Laplace evolution equation with $L^1$-data is not well-posed in the variational setting for $1<p<d$, where $d\in\mathbb{N}$ is the space dimension. In this case the equation can be addressed within the framework of renormalized solutions. The notion of renormalization summarizes different strategies to get rid of infinities (see \cite{DeLM}) that may appear in physical models. It has been introduced to partial differential equations by Di Perna and Lions in the study of Boltzmann equation (see \cite{DL}) and then extended to many elliptic and parabolic problems (see, e.g., \cite{EF}, \cite{PBLBTGRGMPJLV}, \cite{BM97}, \cite{BR} and the references therein). The main idea is to develop an equation that is satisfied for $v=S(u)$, where the nonlinear function $S$ is chosen in order to remove infinite quantities of the solution $u$. This strategy has been applied for stochastic transport equations in \cite{AF}, \cite{CO} and for the Boltzmann equation with stochastic kinetic transport in \cite{PS}.\\

In the study of singular SPDEs such as the Kardar-Parisi-Zhang (KPZ) equation, the idea of renormalization has recently been revisited (see \cite{GIP2015}, \cite{Hai2014} and the references therein). For this type of equations, renormalized solutions are obtained as limits of solutions to regularized problems with addition of diverging correction terms. These terms arise from a renormalization group which is defined in terms of an associated regularity structure.\\
Thanks to the new techniques developed in the theories of regularity structures and rough paths a huge progess in the study of singular SPDEs has been achieved in the last decade.\\

However, the classical $L^1$-theory for nonlinear SPDEs of $p$-Laplace type is an independent topic which has been recently addressed in \cite{SZ20a}, where the notion of renormalized solutions in the sense of \cite{BM97} has extended to stochastic $p$-Laplace evolution equations. In a subsequent contribution (see, e.g., \cite{NSAZ}) existence and uniqueness of renormalized solutions to the stochastic $p$-Laplace evolution problem with random initial data in $L^1(\Omega\times D)$ has been shown in the case of an additive stochastic perturbation, i.e., with an It\^{o} integral $\int_0^t \Phi \,d\beta$ on the right-hand side of the equation, where $\Phi$ is a progressively measurable and square integrable stochastic process. In this contribution, we are interested in well-posedness of the stochastic $p$-Laplace equation in the  multiplicative case, i.e., for an It{\^{o}} integral $\int_0^t \Phi(u) \,d\beta$ on the right-hand side of the equation, where, roughly speaking, $\Phi(u)$ is a Lipschitz function of the solution $u$.\\
In the classical $L^2$-theory, with the solution of the additive problem and an appropriate contraction principle at hand, existence of solutions to the corresponding multiplicative problem can be written in a few lines applying a fixed point argument.\\
In the multiplicative case with $L^1$ initial data, the situation changes dramatically due to new phenomena. The main difficulty is the combination of $L^1$-spatial regularity of $u$ with additional terms entering the renormalized formulation from the It{\^{o}} correction and the non-cancellation of stochastic integrals in differences of solutions. In our study, we put the spotlight on the new techniques developed for the multiplicative case and refer to \cite{NSAZ} for known results in order to avoid doubling of arguments.  

\subsection{Formulation of the problem and assumptions}
Let $(\Omega, \mathcal{F}, P,(\mathcal{F}_t)_{t \in [0,T]}, (\beta_t)_{t\in [0,T]})$ be a stochastic basis with a countably generated \linebreak probability space $(\Omega, \mathcal{F}, P)$, a filtration $(\mathcal{F}_t)_{t \in [0,T]}\subset \mathcal{F}$ satisfying the usual assumptions and a real valued, $\mathcal{F}_t$-Brownian motion $(\beta_t)_{t\in [0,T]}$. Let  $D \subset \mathbb{R}^d$ be a bounded Lipschitz domain, $T>0$, $Q_T=(0,T) \times D$ and $p>1$. Furthermore, let $u_0: \Omega \to L^1(D)$ be $\mathcal{F}_0$-measurable and $\Phi: \Omega \times [0,T] \times \mathbb{R} \to \mathbb{R}$ a function satisfying the following properties:
\begin{enumerate}
\item[$(A1)$] $\Phi(\omega,t,0)=0$ for almost every $(\omega,t) \in \Omega \times [0,T]$,
\item[$(A2)$] $\lambda \mapsto \Phi(\omega, t, \lambda)$ is Lipschitz-continuous for almost every $(\omega,t) \in \Omega \times [0,T]$, i.e. there exists $L>0$ such that for all $\lambda_1, \lambda_2 \in \mathbb{R}$
\begin{align*}
|\Phi(\omega, t, \lambda_2) - \Phi(\omega, t, \lambda_1)| \leq L | \lambda_2 - \lambda_1|
\end{align*}
for almost every $(\omega,t) \in \Omega \times [0,T]$.
\item[$(A3)$] $(\omega, t) \mapsto \Phi(\omega, t , \lambda)$ is progressively measurable for all $\lambda \in \mathbb{R}$.
\end{enumerate}
We are interested in well-posedness to the following stochastic $p$-Laplace evolution problem
\begin{align}\label{1}
du - \textnormal{div}\,(|\nabla u|^{p-2} \nabla u)\,dt &= \Phi(u) ~d\beta ~~~&\textnormal{in}~ \Omega \times Q_T, \notag\\
u&=0 ~~~&\textnormal{on}~ \Omega \times (0,T) \times \partial D, \\
u(0, \cdot)&=u_0 ~~~&\in ~L^1(\Omega \times D). \notag
\end{align}

\subsection{Outline}
In Section \ref{S2}, we will briefly discuss the well-posedness of \eqref{1} for square integrable initial data. In Section \ref{S3} we will formulate and prove a contraction principle for strong solutions that will serve as a basis for the proof of the existence theorem (see Theorem \ref{Theorem 6.2}) in Section \ref{S6}. After some technical preliminaries in Section \ref{S4}, we formulate the notion of renormalized solutions for \eqref{1} in Section \ref{S5}. The uniqueness of renormalized solutions in formulated in Theorem \ref{Theorem 7.1} and contained in Section \ref{S7} together with its proof.
\section{Strong solutions}\label{S2}
\begin{thm}\label{Theorem 2.1}
Let $u_0 \in L^2(\Omega \times D)$ be $\mathcal{F}_0$-measurable. Then there exists a unique strong solution to (\ref{1}), i.e., an $\mathcal{F}_t$-adapted stochastic process $u:\Omega\times [0,T]\to L^2(D)$ such that $u \in L^p(\Omega; L^p(0,T; W_0^{1,p}(D))) \cap L^2(\Omega; \mathcal{C}([0,T];L^2(D)))$, $u(0, \cdot)=u_0$ in $L^2(\Omega \times D)$ and
\begin{align*}
u(t) - u_0 - \int_0^t \textnormal{div}\,(|\nabla u|^{p-2} \nabla u)\, ds = \int_0^t \Phi(u) \,d\beta
\end{align*}
in $W^{-1,p'}(D) + L^2(D)$ for all $t \in [0,T]$ and a.s. in $\Omega$.
\end{thm}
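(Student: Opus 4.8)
The plan is to cast problem \eqref{1} into the classical variational framework for monotone SPDEs and to invoke the abstract existence and uniqueness theory (as in \cite{EP}, \cite{GDPJZ}, \cite{WLMR}). First I would fix a Gelfand triple. For $p\geq\frac{2d}{d+2}$ one has the dense continuous embedding $W_0^{1,p}(D)\hookrightarrow L^2(D)$ and may take $V=W_0^{1,p}(D)$, $H=L^2(D)$, $V^\ast=W^{-1,p'}(D)$. To treat all $p>1$ uniformly, and to explain the target space $W^{-1,p'}(D)+L^2(D)$ appearing in the statement, I would instead work with $V=W_0^{1,p}(D)\cap L^2(D)$ normed by $\|u\|_V=\|u\|_{W_0^{1,p}(D)}+\|u\|_{L^2(D)}$, so that $V\hookrightarrow H=L^2(D)\hookrightarrow V^\ast=W^{-1,p'}(D)+L^2(D)$ with $V$ reflexive and densely embedded. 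Writing $A(u)=\operatorname{div}(|\nabla u|^{p-2}\nabla u)$ and $B(u)=\Phi(u)$, the task is to solve $du=A(u)\,dt+B(u)\,d\beta$ with $u(0)=u_0$ in this triple.

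Second, I would verify the structural hypotheses for $A$ and $B$. Hemicontinuity of $A$ follows from continuity of $\xi\mapsto|\xi|^{p-2}\xi$ and dominated convergence. The decisive point is a \emph{combined} monotonicity estimate: from the elementary monotonicity of the $p$-Laplacian, $\langle A(u)-A(v),u-v\rangle_{V^\ast,V}=-\int_D(|\nabla u|^{p-2}\nabla u-|\nabla v|^{p-2}\nabla v)\cdot\nabla(u-v)\,dx\leq 0$, together with $(A2)$, which gives $\|B(u)-B(v)\|_{L^2(D)}^2\leq L^2\|u-v\|_{L^2(D)}^2$, one obtains $2\langle A(u)-A(v),u-v\rangle+\|B(u)-B(v)\|_{L^2(D)}^2\leq L^2\|u-v\|_{L^2(D)}^2$. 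Coercivity follows from $\langle A(u),u\rangle=-\|\nabla u\|_{L^p(D)}^p$ and the Poincar\'e inequality, controlling the $W_0^{1,p}$-part of $\|u\|_V$, while $\|B(u)\|_{L^2(D)}^2\leq L^2\|u\|_{L^2(D)}^2$ (by $(A1)$–$(A2)$) is absorbed by the $H$-norm term; the growth bound $\|A(u)\|_{V^\ast}\leq C\|u\|_V^{p-1}$ is immediate. Measurability and adaptedness of $B$ are guaranteed by $(A3)$.

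Third, with these hypotheses in hand, the abstract theorem produces a unique $\mathcal{F}_t$-adapted solution $u\in L^p(\Omega;L^p(0,T;V))\cap L^2(\Omega;\mathcal{C}([0,T];L^2(D)))$ satisfying the equation in $V^\ast$, which is precisely the claimed regularity and the identity in $W^{-1,p'}(D)+L^2(D)$; the $\mathcal{C}([0,T];L^2(D))$-continuity is the usual consequence of the It\^o formula for $\|u(t)\|_{L^2(D)}^2$ valid in the Gelfand triple. For uniqueness I would in any case record the direct argument: for two solutions $u,v$, It\^o's formula for $\|u-v\|_{L^2(D)}^2$ yields a nonpositive drift from the monotonicity of $A$, an It\^o correction bounded by $L^2\|u-v\|_{L^2(D)}^2$, and a mean-zero stochastic integral, whence taking expectations and applying Gronwall forces $u=v$.

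The main obstacle would surface only if one constructs the solution from scratch via a Galerkin scheme rather than citing the abstract theory: the identification of the weak limit of the nonlinear term $|\nabla u_n|^{p-2}\nabla u_n$. The monotonicity (Minty) argument must be performed in expectation using the energy balance $E\|u_n(T)\|_{L^2(D)}^2=\|u_n(0)\|_{L^2(D)}^2-2E\int_0^T\|\nabla u_n\|_{L^p(D)}^p\,dt+E\int_0^T\|\Phi(u_n)\|_{L^2(D)}^2\,dt$, and the It\^o correction $E\int_0^T\|\Phi(u_n)\|_{L^2(D)}^2\,dt$ cannot be passed to the limit by weak convergence alone, since $\Phi$ is nonlinear; this is exactly the place where the combined monotonicity above, coupled with the energy identity, is needed to close the argument.
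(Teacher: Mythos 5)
Your proposal is correct and follows essentially the same route as the paper, which simply invokes the abstract variational theory of Krylov and Rozovskii (\cite{NVKBLR}, Chapter II, Theorems 2.1 and 3.1) for monotone, coercive, hemicontinuous operators with Lipschitz noise. You supply the verification of the structural hypotheses and the choice of Gelfand triple that the paper leaves implicit, but the underlying argument is the same.
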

\begin{rem}\label{Remark 2.2}
A-priori, we do not know that the term $\int_0^t \textnormal{div}\,(|\nabla u|^{p-2} \nabla u) \, ds $ is an  element of $L^2(D)$ for all $t \in [0,T]$ and a.s. in $\Omega$, but since we know that all other terms in the equation of Theorem \ref{Theorem 2.1} are elements of $L^2(D)$ for all $t \in [0,T]$ and a.s. in $\Omega$ it follows that $\int_0^t \textnormal{div} (|\nabla u|^{p-2} \nabla u) ~ds  \in L^2(D)$ for all $t \in [0,T]$ and a.s. in $\Omega$. Therefore this equation is an equation in $L^2(D)$.
\end{rem}
\begin{proof}
Similar as in \cite{NSAZ}, the existence result is a consequence of \cite{NVKBLR}, Chapter II, Theorem 2.1 and Corollary 2.1.
The uniqueness result is a consequence of \cite{NVKBLR}, Chapter II, Theorem 3.1.
\end{proof}
\section{Contraction principle}\label{S3}
\begin{thm}\label{Theorem 3.1}
Let $u_0, v_0 \in L^2(\Omega \times D)$ and $u$ and $v$ strong solutions to the problem \eqref{1} with initial value $u_0$ and $v_0$, respectively. Then
\begin{align*}
\sup\limits_{t \in [0,T]} \mathbb{E} \int_D |u(t) - v(t)| \,dx \leq \mathbb{E} \int_D |u_0 - v_0| \,dx
\end{align*}
a.s. in $\Omega$.
\end{thm}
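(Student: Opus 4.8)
The plan is to run the classical $L^1$-contraction argument for monotone operators in its stochastic incarnation, where the role of the sign function is played by a smooth convex approximation of $|\cdot|$ and It\^o's formula replaces the deterministic chain rule. Write $w=u-v$; subtracting the two equations of Theorem \ref{Theorem 2.1} yields
\begin{align*}
w(t) - (u_0-v_0) - \int_0^t \textnormal{div}\,\big(|\nabla u|^{p-2}\nabla u - |\nabla v|^{p-2}\nabla v\big)\,ds = \int_0^t \big(\Phi(u)-\Phi(v)\big)\,d\beta,
\end{align*}
an identity in $W^{-1,p'}(D)+L^2(D)$ valid for all $t\in[0,T]$ and a.s. in $\Omega$.

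First I would fix a family $(S_\delta)_{\delta>0}$ of even, convex functions in $C^2(\mathbb{R})$ with $S_\delta(0)=0$, $|S_\delta'|\le 1$, $S_\delta''\ge 0$, converging to $|\cdot|$ as $\delta\to0$ and satisfying the crucial decay $r^2 S_\delta''(r)\le C\delta$ for all $r$ (for instance $S_\delta(r)=\sqrt{r^2+\delta^2}-\delta$). Since $w\in L^p(0,T;W_0^{1,p}(D))$ and $S_\delta'$ is bounded and Lipschitz, the chain rule gives $S_\delta'(w)\in W_0^{1,p}(D)$, so every duality pairing below is well defined. Applying It\^o's formula to the functional $w\mapsto\int_D S_\delta(w)\,dx$, taking expectations to annihilate the martingale term $\mathbb{E}\int_0^t\int_D S_\delta'(w)(\Phi(u)-\Phi(v))\,dx\,d\beta$, and integrating by parts in the elliptic term, I obtain
\begin{align*}
\mathbb{E}\int_D S_\delta(w(t))\,dx = \mathbb{E}\int_D S_\delta(u_0-v_0)\,dx - I_\delta(t) + \tfrac12\,J_\delta(t),
\end{align*}
where $I_\delta(t)=\mathbb{E}\int_0^t\int_D S_\delta''(w)\,\nabla w\cdot\big(|\nabla u|^{p-2}\nabla u - |\nabla v|^{p-2}\nabla v\big)\,dx\,ds$ and $J_\delta(t)=\mathbb{E}\int_0^t\int_D S_\delta''(w)\,|\Phi(u)-\Phi(v)|^2\,dx\,ds$.

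Next I would discard $I_\delta$ by monotonicity: since $(\xi_1-\xi_2)\cdot(|\xi_1|^{p-2}\xi_1-|\xi_2|^{p-2}\xi_2)\ge 0$ and $S_\delta''\ge 0$, the integrand is nonnegative, hence $I_\delta(t)\ge 0$ and may be dropped. For the correction, assumption $(A2)$ gives $|\Phi(u)-\Phi(v)|^2\le L^2 w^2$, so $J_\delta(t)\le L^2\,\mathbb{E}\int_0^t\int_D S_\delta''(w)\,w^2\,dx\,ds\le L^2 C\,\delta\,|D|\,T$ by the decay property of $S_\delta$. Letting $\delta\to0$, dominated convergence (with $|S_\delta(w)|\le|w|\in L^1(\Omega\times D)$) yields $S_\delta(w(t))\to|w(t)|$ and $S_\delta(u_0-v_0)\to|u_0-v_0|$, while the correction tends to $0$, so $\mathbb{E}\int_D|u(t)-v(t)|\,dx\le\mathbb{E}\int_D|u_0-v_0|\,dx$; taking the supremum over $t\in[0,T]$ concludes.

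The main obstacle is the rigorous justification of It\^o's formula for the nonlinear functional $\int_D S_\delta(w)\,dx$, since the equation for $w$ holds only in $W^{-1,p'}(D)+L^2(D)$ and the standard Gelfand-triple It\^o formula is tailored to the squared $L^2$-norm. I would address this either by invoking a generalized It\^o formula for such functionals in the variational framework, or by first regularizing $w$ in space, applying the finite-dimensional It\^o formula to the mollified process, and passing to the limit; the delicate point in the latter route is controlling the commutator between the mollification and the divergence term so that the monotonicity sign of $I_\delta$ is preserved in the limit.
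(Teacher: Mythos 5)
Your argument is essentially the paper's own proof: subtract the two equations, apply It\^{o}'s formula with a $C^2$ convex approximation of the absolute value, kill the elliptic term by monotonicity, bound the It\^{o} correction by $O(\delta)$ using the Lipschitz property of $\Phi$ together with the decay of the second derivative of the approximation, and let $\delta\to 0$. The only (immaterial) difference is the choice of approximation --- you take $S_\delta(r)=\sqrt{r^2+\delta^2}-\delta$ with $r^2S_\delta''(r)\le\delta$, whereas the paper uses a coercive approximation $N_\delta$ with $N_\delta''\le\tfrac1\delta\chi_{\{|r|\le\delta\}}$ from an earlier work, which also supplies the generalized It\^{o} formula whose justification you correctly flag as the one delicate point.
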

\begin{proof}
We subtract the equations for $u$ and $v$ and we get
\begin{align}\label{222}
u(t) - v(t) = u_0 - v_0 + \int_0^t \textnormal{div} (|\nabla u|^{p-2} \nabla u - |\nabla v|^{p-2} \nabla v) \,ds + \int_0^t \Phi(u) - \Phi(v) \, d\beta
\end{align}
for all $t \in [0,T]$ and a.s. in $\Omega$. Now, for every $\delta >0$, we apply the It\^{o} formula pointwise a.s. with respect to $x\in D$ with a coercive approximation of the absolute value $N_{\delta}$ in \eqref{222} which is defined as in Proposition 5 in \cite{VZ18} (see also \cite{NSAZ}) and we get
\begin{align*}
&\int_D N_{\delta}(u(t)-v(t))  \, dx = \int_D N_{\delta}(u_0 - v_0) \, dx \\
+ &\int_0^t \langle \operatorname{div}(|\nabla u|^{p-2} \nabla u - |\nabla v|^{p-2} \nabla v), N_{\delta}'(u-v) \rangle \,ds \\
+ &\int_0^t \int_D (\Phi(u) - \Phi(v)) N_{\delta}'(u-v) \, dx\, d\beta + \frac{1}{2} \int_0^t \int_D N_{\delta}''(u-v) (\Phi(u) - \Phi(v))^2 \, dx \, ds.
\end{align*}
Applying the expectation and discarding the nonpositive term coming from the $p$-Laplace yields
\begin{align*}
&\mathbb{E} \int_D N_{\delta}(u(t)-v(t))  \, dx \\
\leq \, &\mathbb{E} \int_D N_{\delta}(u_0 - v_0) \, dx + \frac{1}{2} \mathbb{E}\int_0^t \int_D N_{\delta}''(u-v) (\Phi(u) - \Phi(v))^2 \, dx \, ds \\
\leq \, &\mathbb{E} \int_D N_{\delta}(u_0 - v_0) \, dx + \frac{1}{\delta} \mathbb{E}\int_0^t \int_D \chi_{\{|u-v| \leq \delta\}} (\Phi(u) - \Phi(v))^2 \, dx \, ds \\
\leq \, &\mathbb{E} \int_D N_{\delta}(u_0 - v_0) \, dx + \frac{1}{\delta} \mathbb{E}\int_0^t \int_D \chi_{\{|u-v| \leq \delta\}} L^2 \delta^2 \, dx \, ds \\
\leq \, &\mathbb{E} \int_D N_{\delta}(u_0 - v_0) \, dx + \delta L^2 T |D|.
\end{align*}
Now, passing to the limit with $\delta\rightarrow 0^+$ yields
\begin{align*}
\mathbb{E} \int_D |u(t) - v(t)| \,dx \leq \mathbb{E} \int_D |u_0 - v_0| \,dx
\end{align*}
for all $t \in [0,T]$.
\end{proof}
\begin{rem}
In \cite{NSAZ}, the integrand of the stochastic integral in the equation for $u-v$ is equal to $0$ since we consider the additive case there. Therefore, in this case, the integrand of the stochastic integral in the equation for $\int_D N_{\delta}(u(t)-v(t))  \, dx$ is equal to $0$ as well. But in the multiplicative case the stochastic integral does not vanish and does not tend to $0$ uniformly in $t\in [0,T]$, a.s. in $\Omega$ for $\delta \to 0^+$. Hence, in contrast to \cite{NSAZ}, we have to apply the expectation before applying the supremum over $t \in [0,T]$ which leads to a weaker comparison principle as in \cite{NSAZ}.
\end{rem}
\section{Technical preliminaries}\label{S4}
\subsection{Sum and intersection spaces}
For $1<p<\infty$ let $p'$ bet the conjugate exponent to $p$. Let us recall that the spaces $L^1(D)$ and $W^{-1,p'}(D)$ are continuously embedded into the space of distributions $\mathcal{D}'(D)$. Moreover, by density of the test functions in $L^1(D)$ and in $L^{p'}(D)$, it follows that $L^1(D)\cap W^{-1,p'}(D)$ is dense in $L^1(D)$ and $W^{-1,p'}(D)$. Therefore, the sum space
\[L^1(D)+W^{-1,p'}(D)=\{w=u+v \ |\ u\in L^1(D), \ v\in W^{-1,p'}(D)\}\]
is well defined and a Banach space with respect to the norm
\[|\Vert w\Vert|:=\inf\{\Vert u\Vert_{L^1}+\Vert v\Vert_{W^{-1,p'}} \ | \ u\in L^1(D), \ v\in W^{-1,p'}(D), \ u+v=w\},\]
see, e.g., \cite{FKS05}, p.23. Moreover, the dual space is given by $(L^1(D)+W^{-1,p'}(D))'=W^{1,p}_0(D)\cap L^{\infty}(D)$.
\subsection{Weakly continuous functions}
For a Banach space $V$ with dual space $V'$, a function $u:[0,T]\rightarrow V$ is called weakly continuous, iff the function
\[[0,T]\ni t\mapsto \langle u(t),v\rangle_{V,V'} \]
is continuous for all $v\in V'$. The locally convex space of weakly continuous functions with values in $V$ will be denoted by $\mathcal{C}_w([0,T];V)$ in the sequel. For further details on the properties these spaces we refer to \cite{GGZ}, pp.120-126.
In particular, the following result holds true
\begin{lemma}[see \cite{Temam}, Lemma 1.4, p.263]\label{Lemma 1.4}
Let $X$ and $Y$ be Banach spaces such that $X\subset Y$ with a continuous injection. If a function $\phi$ belongs to $L^{\infty}(0,T;X)$ and is weakly continuous with values in $Y$, then $\phi$ is weakly continuous with values in $X$.
\end{lemma}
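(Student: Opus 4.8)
The plan is to derive weak continuity in $X$ from the given weak continuity in $Y$ by a density argument, whose execution rests on first upgrading the essential $X$-bound to a pointwise one. Throughout I use that the continuous injection $X\subset Y$ induces a continuous adjoint map $j^{*}:Y'\to X'$ compatible with the pairings, in the sense that $\langle v,g\rangle_{X,X'}=\langle v,g\rangle_{Y,Y'}$ for every $v\in X$ and every $g\in Y'$; when $X$ is dense in $Y$ and both spaces are reflexive (as is the case in our applications) the range of $j^{*}$ is moreover norm-dense in $X'$, since a reflexive dense embedding dualizes to a dense embedding. Write $M:=\|\phi\|_{L^{\infty}(0,T;X)}$.

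\emph{Step 1 (pointwise membership and bound).} First I would fix $t_{0}\in[0,T]$ and show $\phi(t_{0})\in X$ with $\|\phi(t_{0})\|_{X}\le M$. Since $\phi\in L^{\infty}(0,T;X)$, there is a null set $N$ with $\|\phi(t)\|_{X}\le M$ for $t\notin N$; choose $t_{n}\to t_{0}$ with $t_{n}\notin N$. The sequence $(\phi(t_{n}))$ is bounded in the reflexive space $X$, so a subsequence satisfies $\phi(t_{n_{k}})\rightharpoonup\xi$ in $X$ for some $\xi\in X$ with $\|\xi\|_{X}\le M$ by weak lower semicontinuity of the norm. For every $g\in Y'$ the pairing compatibility and the weak convergence give $\langle\xi,g\rangle_{Y,Y'}=\lim_{k}\langle\phi(t_{n_{k}}),g\rangle_{Y,Y'}$, while weak continuity in $Y$ gives $\langle\phi(t_{n_{k}}),g\rangle_{Y,Y'}\to\langle\phi(t_{0}),g\rangle_{Y,Y'}$. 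Hence $\xi$ and $\phi(t_{0})$ induce the same functional on $Y'$, so $\xi=\phi(t_{0})$ in $Y$; in particular $\phi(t_{0})=\xi\in X$ and $\|\phi(t_{0})\|_{X}\le M$.

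\emph{Step 2 (weak continuity in $X$).} Next I fix $f\in X'$, $t_{0}\in[0,T]$ and $\varepsilon>0$, and by norm-density of $j^{*}(Y')$ in $X'$ I choose $g\in Y'$ with $\|f-g\|_{X'}<\varepsilon$. Splitting into three terms,
\[
|\langle\phi(t)-\phi(t_{0}),f\rangle_{X}|\le|\langle\phi(t),f-g\rangle_{X}|+|\langle\phi(t)-\phi(t_{0}),g\rangle_{X}|+|\langle\phi(t_{0}),g-f\rangle_{X}|,
\]
the first and third terms are each at most $M\varepsilon$ by the uniform bound from Step 1, while the middle term equals $|\langle\phi(t)-\phi(t_{0}),g\rangle_{Y,Y'}|$ by pairing compatibility and tends to $0$ as $t\to t_{0}$ by weak continuity in $Y$. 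Thus $\limsup_{t\to t_{0}}|\langle\phi(t)-\phi(t_{0}),f\rangle_{X}|\le 2M\varepsilon$, and letting $\varepsilon\to 0^{+}$ yields continuity of $t\mapsto\langle\phi(t),f\rangle_{X}$ at $t_{0}$. Since $f\in X'$ and $t_{0}$ were arbitrary, $\phi\in\mathcal{C}_{w}([0,T];X)$.

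The main obstacle is Step 1: weak continuity in $Y$ is an everywhere statement whereas the $X$-bound holds only almost everywhere, and bridging this gap forces a weak-compactness argument in $X$ together with the identification of the weak $X$-limit with the everywhere-defined $Y$-value. This is exactly where reflexivity of $X$ and the separation of points of $Y$ by $Y'$ are essential; once the pointwise bound of Step 1 is secured, the density estimate of Step 2 is routine.
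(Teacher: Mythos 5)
The paper gives no proof of this lemma; it is quoted verbatim from Temam and used as a black box, so the only meaningful comparison is with the classical argument, and your proof is exactly that argument: upgrade the essential bound to a pointwise bound via weak compactness and identification of limits in $Y$, then conclude by an $\varepsilon/3$ density argument in $X'$. Both steps are executed correctly \emph{granted that $X$ is reflexive}, and as a minor point, in Step 2 you do not need $X$ dense in $Y$ nor $Y$ reflexive: injectivity of the embedding together with reflexivity of $X$ already forces $j^{*}(Y')$ to be norm-dense in $X'$ (a functional $\Lambda\in X''=X$ annihilating $j^{*}(Y')$ satisfies $\langle g,j\Lambda\rangle=0$ for all $g\in Y'$, hence $\Lambda=0$).

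The one point you should not pass over is that reflexivity of $X$ is load-bearing in \emph{both} steps (weak sequential compactness of the ball in Step 1, density of $j^{*}(Y')$ in Step 2), yet it does not appear among the hypotheses of the lemma as stated, and your parenthetical ``as is the case in our applications'' is false for this paper: Remark \ref{Remark 4.2} invokes the lemma with $X=L^{1}(D)$ and $Y=L^{1}(D)+W^{-1,p'}(D)$, and $L^{1}(D)$ is not reflexive. This is not a cosmetic issue. Without some weak-compactness hypothesis the statement itself fails: in one dimension the family $\phi(t)=t^{-1}\chi_{[0,t]}$ is bounded in $L^{1}$ and weakly continuous into $L^{1}+W^{-1,p'}$ with $\phi(0)=\delta_{0}\notin L^{1}$, so the conclusion cannot hold at $t=0$. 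Thus your proof is a faithful reconstruction of the cited result in its natural (reflexive) setting, but it does not, and cannot, establish the statement in the generality in which it is printed; for the non-reflexive application one has to replace Step 1 by a weak-$*$ compactness argument in the space of measures combined with an equi-integrability property of the particular process at hand, which is additional information not contained in the hypotheses of the lemma.
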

\subsection{A generalized It{\^{o}} formula}
\begin{prop}\label{Proposition 4.2}
Let $G \in L^{p'}(\Omega \times Q_T)^d$, $g\in L^2(\Omega\times Q_T)$, $f\in L^1(\Omega\times Q_T)$ be progressively measurable, $u_0 \in L^1(\Omega \times D)$ be $\mathcal{F}_0$-measurable. Define the continuous, $L^1(D)+W^{-1,p'}(D)$-valued process $u$ by the equality
\begin{align}\label{2}
u(t) - u_0 +\int_0^t (-\textnormal{div}\,G +f) \,ds = \int_0^t g \,d\beta
\end{align}
for all $t \in [0,T]$ and a.s. in $\Omega$. If for its $dP\otimes dt$-equivalence class we have
$u \in L^p(\Omega; L^p(0,T;W_0^{1,p}(D)))$ and $\operatorname{ess}\sup_{t\in [0,T]} \mathbb{E}\Vert u(t)\Vert_{L^1}<\infty$, then, for all $\psi \in C^{\infty}([0,T] \times \overline{D})$ and all $S\in W^{2,\infty}(\mathbb{R})$ with $S''$ piecewise continuous 
such that $S'(0)=0$ or $\psi(t,x) =0$ for all $(t,x) \in [0,T]\times \partial D$, we have
\begin{align}\label{Itoformulat}
&(S(u(t)),\psi(t))_2 - (S(u_0),\psi(0))_2+ \int_0^t \langle -\operatorname{div}\,G+f,S'(u)\psi\rangle\,ds\nonumber\\ 
&=\int_0^t (S'(u)g,\psi)_2\,d\beta + \int_0^t (S(u),\psi_t)_2 \,ds + \frac{1}{2} \int_0^t \int_D  S''(u)g^2 \psi  \,dx\,ds
\end{align}
for all $t \in [0,T]$ and a.s. in $\Omega$, where
\begin{align*}
&\langle -\operatorname{div}\,G+f,S'(u)\psi\rangle=\langle -\operatorname{div}\,G+f,S'(u)\psi\rangle_{W^{-1,p'}(D)+L^1(D), W^{1,p}_0(D)\cap L^{\infty}(D)}\\
&=\int_D(G\cdot\nabla[S'(u)\psi]+fS'(u)\psi)\, dx
\end{align*}
a.e. in $\Omega\times (0,T)$.
\end{prop}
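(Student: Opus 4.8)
The plan is to reduce, by two successive approximations, to a setting where the classical $L^2$-valued It\^o formula (in the Pardoux / Krylov--Rozovskii form) applies to a smooth functional of a genuine Hilbert-space semimartingale, and then to pass to the limit term by term. Before anything else I would check that all quantities are well defined: since $S\in W^{2,\infty}(\mathbb{R})$, the function $S(u)$ is bounded, $S'(u)g\in L^2$ and $S''(u)g^2\in L^1$, while $S'(u)\psi\in W^{1,p}_0(D)\cap L^{\infty}(D)$ with $\nabla[S'(u)\psi]=S''(u)\psi\nabla u+S'(u)\nabla\psi$ by the chain rule in $W^{1,p}$ (here $u\in L^p(\Omega;L^p(0,T;W^{1,p}_0(D)))$ is used). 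The alternative hypothesis $S'(0)=0$ or $\psi|_{\partial D}=0$ guarantees that $S'(u)\psi$ has vanishing trace on $\partial D$ (recall $u\in W^{1,p}_0(D)$), so the duality pairing $\langle -\operatorname{div}G+f,S'(u)\psi\rangle$ is legitimate and coincides with the claimed integral expression.

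The first reduction is on $S$. I would approximate $S$ by $S_k\in C^{\infty}(\mathbb{R})$ with $S_k\to S$ and $S_k'\to S'$ uniformly, $S_k''\to S''$ pointwise at the continuity points of $S''$, and $\|S_k''\|_{\infty}$ bounded uniformly in $k$. Proving \eqref{Itoformulat} for each $S_k$ and then letting $k\to\infty$ recovers the general statement by dominated convergence, the dominating functions coming from the boundedness of $S,S',S''$ combined with $u\in L^1$, $g\in L^2$, $G\in L^{p'}$ and $f\in L^1$; this is also where the mere piecewise continuity of $S''$ is absorbed.

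The second reduction is on $u$, by spatial regularization. Since mollification in the space variable is a fixed bounded linear operator that sees neither time nor randomness, it commutes with the It\^o integral, so for fixed $\varepsilon$ the process $u_{\varepsilon}:=\rho_{\varepsilon}\ast u$ is a genuine $L^2(D)$-valued It\^o process
\begin{align*}
u_{\varepsilon}(t)=u_{0,\varepsilon}+\int_0^t(\operatorname{div}G-f)_{\varepsilon}\,ds+\int_0^t g_{\varepsilon}\,d\beta ,
\end{align*}
whose drift and diffusion both lie in $L^2$. Applying the classical $L^2$-It\^o formula to the $C^{1,2}$-functional $(t,v)\mapsto\int_D S_k(v)\psi(t)\,dx$ (the explicit $t$-dependence of $\psi$ producing the term $(S_k(u_{\varepsilon}),\psi_t)_2$) yields the regularized analogue of \eqref{Itoformulat}, and an integration by parts (whose boundary contribution vanishes exactly by the trace condition above) rewrites the drift as $\int_0^t\int_D(G_{\varepsilon}\cdot\nabla[S_k'(u_{\varepsilon})\psi]+f_{\varepsilon}S_k'(u_{\varepsilon})\psi)\,dx\,ds$.

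Finally I would pass to the limit $\varepsilon\to 0^+$ in each term: the two endpoint-in-time terms and the $(\cdot,\psi_t)_2$ term converge because $S_k(u_{\varepsilon})\to S_k(u)$ in $L^1$; the stochastic integral converges via the It\^o isometry once $\int_D S_k'(u_{\varepsilon})g_{\varepsilon}\psi\,dx\to\int_D S_k'(u)g\psi\,dx$ in $L^2(\Omega\times(0,t))$; and the correction term converges from $g_{\varepsilon}^2\to g^2$ in $L^1$ and $S_k''(u_{\varepsilon})\to S_k''(u)$ boundedly a.e. The step I expect to be the main obstacle is the convergence of the drift $\int_0^t\int_D G_{\varepsilon}\cdot\nabla[S_k'(u_{\varepsilon})\psi]\,dx\,ds$: it couples $G\in L^{p'}$ with $\nabla[S_k'(u_{\varepsilon})\psi]=S_k''(u_{\varepsilon})\psi(\nabla u)_{\varepsilon}+S_k'(u_{\varepsilon})\nabla\psi$, so one must establish $(\nabla u)_{\varepsilon}\to\nabla u$ in $L^p$, control the nonlinear factor $S_k''(u_{\varepsilon})$, and — in the presence of only $L^1$ integrability of $f$ and $u$ — secure enough uniform integrability in $\omega$ to pass the expectation through; this last point is precisely what $\operatorname{ess\,sup}_{t\in[0,T]}\mathbb{E}\|u(t)\|_{L^1}<\infty$ is for. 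A genuine but minor technicality throughout is that mollification on the bounded domain $D$ must be arranged near $\partial D$ (by extension of $u$ or by interior mollification tied to the boundary condition) so as not to spoil the vanishing of the boundary integrals.
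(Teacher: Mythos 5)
Your strategy---smooth the nonlinearity $S$, regularize $u$ in space so that the classical It\^{o} calculus applies, and pass to the limit term by term---is exactly the strategy behind the paper's proof, which is simply a citation of Lemma 4.1 in the reference [NSAZ]; the same machinery is displayed in this paper's own Appendix (Proposition \ref{itoproduct}). Two remarks on where the details diverge. First, the technicality you flag at the end (mollification near $\partial D$) is resolved in the paper not by convolution but by a family of regularization operators $\Pi_n:W^{-1,p'}(D)+L^1(D)\to W^{1,p}_0(D)\cap\mathcal{C}^{\infty}(\overline{D})$ in the spirit of Fellah--Pardoux, bounded on each of the relevant spaces and converging pointwise to the identity; this preserves the homogeneous boundary condition by construction and also lets one apply the one-dimensional It\^{o} formula pointwise in $x$ and then integrate over $D$, avoiding the verification that $v\mapsto\int_D S_k(v)\psi\,dx$ is a $C^2$ functional on $L^2(D)$ in the Fr\'{e}chet sense, which your infinite-dimensional formulation would require.

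Second, and more substantively, you misattribute the role of the hypothesis $\operatorname{ess\,sup}_{t\in[0,T]}\mathbb{E}\Vert u(t)\Vert_{L^1}<\infty$: it is not there to provide uniform integrability in $\omega$ for the drift term. A priori, \eqref{2} only defines $u(t)$ as an element of $L^1(D)+W^{-1,p'}(D)$ for each fixed $t$, on which $S$ cannot be evaluated; the $\operatorname{ess\,sup}$ bound combined with Lemma \ref{Lemma 1.4} (see Remark \ref{Remark 4.2}) upgrades $u$ to $\mathcal{C}_w([0,T];L^1(D))$, so that $u(t)$ is a genuine $L^1(D)$-function for \emph{every} $t\in[0,T]$ and the endpoint term $(S(u(t)),\psi(t))_2$ in \eqref{Itoformulat} is meaningful for all $t$ rather than only for a.e. $t$. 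As written, your limit passage identifies the endpoint terms only up to a $dt$-null set of times, so you would need to add this continuity argument to recover the statement ``for all $t\in[0,T]$''; this is precisely the point the paper singles out when adapting the cited lemma.
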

\begin{rem}\label{Remark 4.2}
From Lemma 1.4 in \cite{Temam} we know that
\begin{align*}
L^{\infty}(0,T; L^1(D)) \cap \mathcal{C}_w([0,T]; L^1(D) + W^{-1,p'}(D)) \subset \mathcal{C}_w([0,T]; L^1(D)).
\end{align*}
Hence, in Proposition \ref{Proposition 4.2} we have $u \in \mathcal{C}_w([0,T]; L^1(D))$ a.s. in $\Omega$. Therefore, $u(t) \in L^1(D)$ makes sense for all $t \in [0,T]$, a.s. in $\Omega$.\\
Moreover, since the right-hand side of \eqref{2} is an element of $L^2(D)$ for all $t\in [0,T]$, even if the members on the left-hand are not in $L^2(D)$, \eqref{2} holds also in $L^2(D)$.
\end{rem}
\begin{proof}
See \cite{NSAZ}, Lemma 4.1. In the statement of this lemma $u$ requires to be an element of $L^1(\Omega; \mathcal{C}([0,T]; L^1(D)))$. But in the proof of this lemma it is only necessary that $u(t) \in L^1(D)$ makes sense for all $t \in [0,T]$, a.s. in $\Omega$ and this is the case because of Remark \ref{Remark 4.2}.
\end{proof}

\section{Renormalized solutions}\label{S5}
Let us assume that there exists a strong solution $u$ to \eqref{1} in the sense of Theorem \ref{Theorem 2.1}. We observe that for initial data $u_0$ merely in $L^1$, the It\^{o} formula for the square of the norm (see, e.g., \cite{EP}) can not be applied and consequently the natural a priori estimate for $\nabla u$ in $L^p(\Omega\times Q_T)^d$ is not available. Choosing $g=\Phi(u)$, $f\equiv 0$, $\psi\equiv 1$ and 
\[S(u)=\int_0^{u}T_k(r)\, dr\]
in \eqref{Itoformulat}, where $T_k:\mathbb{R}\rightarrow\mathbb{R}$ is the truncation function at level $k>0$ defined by
\begin{align*}
T_k(r)=\begin{cases} r &,~|r| \leq k, \\ 
k\operatorname{sign}(r) &,~ |r|> k,
\end{cases}
\end{align*}
we find that there exists a constant $C(k)\geq 0$ depending on the truncation level $k>0$, such that
\begin{align*}
\mathbb{E}\int_0^T\int_D |\nabla T_k(u)|^p\,dx\,ds\leq C(k).
\end{align*}
As in the deterministic case, the notion of renormalized solutions takes this information into account : 
\begin{defn}\label{Definition 5.1}
Let $u_0 \in L^1(\Omega \times D)$ be $\mathcal{F}_0$-measurable. A progressively measurable process $u:\Omega\times (0,T)\rightarrow L^1(D)$ such that $u\in L^1(\Omega \times Q_T)$ is a renormalized solution to (\ref{1}) with initial value $u_0$, iff
\begin{itemize}
\item[(i)]$\operatorname{ess}\sup_{t\in (0,T)}\mathbb{E}\Vert u(t)\Vert_{L^1}<+\infty$ and
$T_k(u) \in L^p(\Omega; L^p(0,T;W_0^{1,p}(D)))$ for all $k>0$,
\item[(ii)]
For all $\psi \in \mathcal{C}^{\infty}([0,T] \times \bar{D})$ and all $S \in \mathcal{C}^2(\mathbb{R})$ such that $S'$ has compact support with $S'(0)=0$ or $\psi(t,x) =0$ for all $(t,x) \in [0,T] \times \partial D$ the equality
\begin{align}\label{reneq190204}
&\int_D S(u(t))\psi(t) - S(u_0) \psi(0) \,dx + \int_0^t \int_D S''(u) |\nabla u|^p \psi \,dx\,ds\nonumber \\
+ &\int_0^t \int_D S'(u) |\nabla u|^{p-2} \nabla u \cdot \nabla \psi \,dx\,ds\nonumber\\
= &\int_0^t \int_D S'(u) \psi \Phi(u) \,dx\,d\beta + \int_0^t \int_D S(u) \psi_t \,dx\,ds + \frac{1}{2} \int_0^t \int_D  S''(u) \psi \Phi(u)^2 \,dx\,ds
\end{align}
holds true a.s. in $\Omega\times (0,T)$.
\item[(iii)]
The following energy dissipation condition holds true:
\begin{align*}
\lim\limits_{k \to \infty}\mathbb{E}\int_{\{k < |u| < k+1 \}} |\nabla u|^p \,dx\,dt= 0.
\end{align*}
\end{itemize}
\end{defn}
\begin{rem}
For $u$ as in Definition \ref{Definition 5.1} such that $(i)$ is satisfied, $(ii)$ implies that for any $S\in\mathcal{C}^2(\mathbb{R})$ such that $S'$ has compact support with $S(0)=0$ there exists a version of $S(u)$ with paths in $\mathcal{C}([0,T];L^1(D)+W^{-1,p'}(D))$ and this version satisfies
\begin{align}\label{180625_01}
&S(u(t))-S(u(0))-\int_0^t \operatorname{div}\,(S'(u)|\nabla u|^{p-2}\nabla u) \,ds+\int_0^t S''(u)[|\nabla u|^p-\frac{1}{2}\Phi(u)^2] \,ds \nonumber\\
&=\int_0^t \Phi(u) S'(u) \,d\beta,
\end{align}
or equivalently, in differential form,
\begin{align}\label{SPDE1}
&dS(u)-\operatorname{div}\,(S'(u)|\nabla u|^{p-2}\nabla u) \,dt+S''(u)[|\nabla u|^p-\frac{1}{2}\Phi(u)^2]\,dt
=\Phi(u) S'(u) \,d\beta
\end{align}
in $W^{-1,p'}(D)+L^1(D)$ a.s. in $\Omega$ for any $t\in[0,T]$.  Since the right-hand side of \eqref{SPDE1} is in $L^2(D)$, the equation also holds in $L^2(D)$. From Definition \ref{Definition 5.1} it follows that $S(u)$ is bounded and therefore by Remark \ref{Remark 4.2} it follows that $S(u)\in \mathcal{C}_w([0,T];L^1(D))$.
\end{rem}
\begin{rem}\label{Remark 5.2}
If $u$ is a renormalized solution to \eqref{1}, thanks to \eqref{SPDE1}, the It\^{o} formula from Proposition \ref{Proposition 4.2} still holds true for $S(u)$ for any $S\in W^{2,\infty}(\mathbb{R})$ with $\operatorname{supp}(S')$ compact such that $S(u)\in W^{1,p}_0(D)$ a.s. in $\Omega\times (0,T)$. Indeed, in this case \eqref{2} is satisfied for the progressively measurable functions
\[\tilde{u}=S(u)\in L^p(\Omega; L^p(0,T;W_0^{1,p}(D)))\cap L^1(\Omega; \mathcal{C}([0,T]; L^1(D) + W^{-1,p'}(D))),\] 
\[G=S'(u)|\nabla u|^{p-2}\nabla u\in L^{p'}(\Omega\times Q_T)^d,\]
\[f=S''(u)[|\nabla u|^p-\frac{1}{2}\Phi(u)^2\in L^1(\Omega\times Q_T),\]
\[g=\Phi(u) S'(u)\in L^2(\Omega\times Q_T).\] 
\end{rem}
\begin{rem}
Let $u$ be a renormalized solution to \eqref{1} with $\nabla u\in L^p(\Omega\times Q_T)^d$. For fixed $l>0$, let $h_l:\mathbb{R}\rightarrow\mathbb{R}$ be defined by
\begin{align*}
h_l(r)=\begin{cases} 0 &,~|r| \geq l+1 \\ 
l+1-|r| &,~ l<|r|<l+1\\
1 &,~ |r|\leq l.
\end{cases}
\end{align*}
Taking  $S(u)=\int_0^{u} h_l(r) \,dr$ as a test function in \eqref{reneq190204}, we may pass to the limit with $l\rightarrow\infty$ and we find that $u$ is a strong solution to \eqref{1} (see also \cite{NSAZ}).
\end{rem}

\section{Existence of renormalized solutions}\label{S6}
In this Section, we fix $u_0 \in L^1(\Omega \times D)$ $\mathcal{F}_0$-measurable. Let $(u_0^n)_n \subset L^2(\Omega \times D)$ be an $\mathcal{F}_0$-measurable sequence such that $|u_0^n|\leq |u_0|$ for all $n\in\mathbb{N}$ and $\lim_{n\rightarrow\infty}u_0^n=u_0$ in $L^1(\Omega \times D)$ and in $L^1(D)$ for a.e. $\omega\in\Omega$. A possible choice is $u_0^n=T_n(u_0)$, $n\in\mathbb{N}$.
\begin{thm}\label{Theorem 6.2}
Let $\Phi$ be bounded. Then, there exists a renormalized solution to \eqref{1} with initial datum $u_0$.
\end{thm}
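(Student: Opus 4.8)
The plan is to construct the renormalized solution as a limit of the strong solutions $u_n$ to \eqref{1} with the square-integrable initial data $u_0^n = T_n(u_0)$, whose existence is guaranteed by Theorem \ref{Theorem 2.1}. The strategy divides into three parts: first deriving uniform (in $n$) a priori estimates, then extracting a limit and identifying the limit of the nonlinear flux, and finally verifying that the limit satisfies the renormalized formulation of Definition \ref{Definition 5.1}, including the delicate energy dissipation condition (iii).

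First I would establish the uniform estimates. Applying the generalized It\^{o} formula of Proposition \ref{Proposition 4.2} to $u_n$ with $S(r)=\int_0^r T_k(s)\,ds$, $\psi\equiv 1$, $g=\Phi(u_n)$, $f\equiv 0$ — exactly the computation sketched just before Definition \ref{Definition 5.1} — and using that $S''=\chi_{\{|r|<k\}}$ together with the boundedness of $\Phi$ and the bound $|u_0^n|\le|u_0|$, I expect to obtain
\begin{align*}
\mathbb{E}\int_0^T\int_D |\nabla T_k(u_n)|^p\,dx\,ds \leq C(k)
\end{align*}
with $C(k)$ independent of $n$, and simultaneously a uniform bound $\operatorname{ess\,sup}_{t}\mathbb{E}\|u_n(t)\|_{L^1}\le\mathbb{E}\|u_0\|_{L^1}+C$. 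Crucially, the contraction principle Theorem \ref{Theorem 3.1} gives $\sup_t\mathbb{E}\|u_n(t)-u_m(t)\|_{L^1}\le\mathbb{E}\|u_0^n-u_0^m\|_{L^1}\to 0$, so $(u_n)$ is Cauchy and converges strongly in $L^\infty(0,T;L^1(\Omega\times D))$, hence along a subsequence $P\otimes dt\otimes dx$-a.e., to some limit $u$. The truncation bounds then yield $T_k(u_n)\rightharpoonup T_k(u)$ weakly in $L^p(\Omega;L^p(0,T;W_0^{1,p}(D)))$ for each $k$, so that condition (i) holds for $u$.

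The main obstacle, as the authors themselves emphasize in the introduction, is identifying the weak limit of the nonlinear flux $|\nabla T_k(u_n)|^{p-2}\nabla T_k(u_n)$ and handling the It\^{o} correction terms in the multiplicative setting. For the flux I would adapt a Minty--Browder / monotonicity argument at the level of truncations: using the equation for $u_n$ tested against $S'(u_n)$ with $S$ having compact support, combined with the a.e. convergence of $u_n$ and the almost sure convergence of the stochastic integrals (which, unlike in \cite{NSAZ}, do not cancel in differences), one shows that $\nabla T_k(u_n)\to\nabla T_k(u)$ strongly in $L^p(\Omega\times Q_T)^d$ along a subsequence, so that the flux converges weakly to $|\nabla T_k(u)|^{p-2}\nabla T_k(u)$. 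The strong convergence of $u_n$ and the Lipschitz continuity (A2) of $\Phi$ give $\Phi(u_n)\to\Phi(u)$ in $L^2(\Omega\times Q_T)$, which controls both the stochastic integral $\int_0^t\int_D S'(u)\psi\,\Phi(u)\,dx\,d\beta$ (via the It\^{o} isometry) and the correction term $\tfrac12\int_0^t\int_D S''(u)\psi\,\Phi(u)^2\,dx\,ds$ in \eqref{reneq190204}.

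Finally I would pass to the limit in the renormalized equation and verify (iii). With the convergences above, every term of \eqref{reneq190204} for $u_n$ — recalling $S''(u_n)|\nabla u_n|^p=|\nabla T_k(u_n)|^p$ on $\operatorname{supp}S''$ by the chain rule for truncations — passes to the corresponding term for $u$, for all admissible $S$ and $\psi$; here the uniform $L^1$ bound on $u_n(t)$ and the weak continuity furnished by Remark \ref{Remark 4.2} ensure the boundary-in-time terms converge. The energy dissipation condition (iii) is the last and most technical step: I would test the equation for $u_n$ with $S$ whose derivative is supported on $\{k<|r|<k+1\}$ (e.g. using $h_l$-type cutoffs), obtaining an identity for $\mathbb{E}\int_{\{k<|u_n|<k+1\}}|\nabla u_n|^p$, then pass to the limit in $n$ using the uniform estimates and lower semicontinuity, and afterwards let $k\to\infty$; the uniform $L^1$-control of $u_n$ forces the measure of the level sets $\{k<|u|<k+1\}$ to vanish, and the extra It\^{o}-correction term $\tfrac12 S''(u)\Phi(u)^2$, bounded thanks to the boundedness of $\Phi$, is shown to contribute negligibly in the limit. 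I expect this interplay between the level-set energy identity and the non-vanishing stochastic correction to be where the genuinely new multiplicative-case difficulty is concentrated.
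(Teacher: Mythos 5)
Your proposal follows essentially the same route as the paper: approximation by strong solutions with truncated data, the truncated energy estimate from Proposition \ref{Proposition 4.2}, the $L^1$-Cauchy property from the contraction principle of Theorem \ref{Theorem 3.1}, a monotonicity/Minty argument at the level of truncations to get strong convergence of $\nabla T_k(u_n)$ (the paper's Lemmas \ref{Lemma 5.6} and \ref{Lemma 6.3}, which rely on the It\^{o} product rule to handle the non-cancelling stochastic integrals), and finally the level-set energy identity with $h_l$-cutoffs plus the boundedness of $\Phi$ to obtain the energy dissipation condition (Lemma \ref{190206_lem01} and Fatou). The outline is correct and matches the paper's proof structure, though the monotonicity step is only sketched where the paper supplies the detailed decomposition $I_k^{n,m}+J_k^{n,m}+J_k^{m,n}$.
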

Theorem \ref{Theorem 6.2} will be proved succesively in the following Lemmas.
\begin{lem}\label{Lemma 2}
There exist constants $C(k), C(k,k')>0$ only depending on $k>0$ or $k,k'>0$, respectively, such that
\begin{itemize}
\item[(i)] $\mathbb{E} \int_0^T \int_D |\nabla T_k(u_n)| \,dx \,dt \leq C(k)$ for all $k>0$ and all $n \in \mathbb{N}$,
\item[(ii)] $\mathbb{E} \int_0^T \int_D |\nabla \theta_k^{k'}(u_n)| \,dx \,dt \leq C(k,k')$ for all $k,k'>0$ and all $n \in \mathbb{N}$, where $\theta_k^{k'}(r):= T_{k+k'}(r) - T_k(r)$ for all $r \in \mathbb{R}$.
\end{itemize}
\end{lem}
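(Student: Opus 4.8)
The plan is to derive the stated $L^1$-bounds from the natural $L^p$-estimates on the gradients of truncations, which are in turn produced by the generalized It\"{o} formula of Proposition \ref{Proposition 4.2} applied to each strong solution $u_n$. Being a strong solution with initial datum $u_0^n\in L^2(\Omega\times D)$, the process $u_n$ lies in $L^p(\Omega;L^p(0,T;W_0^{1,p}(D)))\cap L^2(\Omega;\mathcal{C}([0,T];L^2(D)))$, so that with $G=|\nabla u_n|^{p-2}\nabla u_n\in L^{p'}(\Omega\times Q_T)^d$, $f\equiv 0$ and $g=\Phi(u_n)\in L^2(\Omega\times Q_T)$ the hypotheses of Proposition \ref{Proposition 4.2} are met. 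In each case I would take expectations so that the martingale term drops out, discard the nonnegative contribution evaluated at the final time, and bound the two remaining terms uniformly in $n$.

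For part $(i)$, choose $\psi\equiv 1$ and $S(r)=\int_0^r T_k(s)\,ds$, which belongs to $W^{2,\infty}(\mathbb{R})$ with $S'=T_k$ Lipschitz, $S'(0)=0$ and $S''=\chi_{\{|r|\le k\}}$ piecewise continuous, so Proposition \ref{Proposition 4.2} applies. Since $\nabla T_k(u_n)=\chi_{\{|u_n|\le k\}}\nabla u_n$, the $p$-Laplace pairing becomes $\int_D|\nabla T_k(u_n)|^p\,dx$, and taking expectations at $t=T$ yields
\begin{align*}
\mathbb{E}\int_D S(u_n(T))\,dx+\mathbb{E}\int_0^T\int_D|\nabla T_k(u_n)|^p\,dx\,ds=\mathbb{E}\int_D S(u_0^n)\,dx+\tfrac12\mathbb{E}\int_0^T\int_D\chi_{\{|u_n|\le k\}}\Phi(u_n)^2\,dx\,ds.
\end{align*}
Now $S\ge 0$ lets me discard the first term on the left; $0\le S(u_0^n)\le k|u_0^n|\le k|u_0|$ bounds the initial term uniformly in $n$; and on $\{|u_n|\le k\}$ one has $|\Phi(u_n)|=|\Phi(u_n)-\Phi(0)|\le Lk$ by $(A1)$ and $(A2)$, so the It\"{o} correction is at most $\tfrac12 L^2k^2T|D|$. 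This gives $\mathbb{E}\int_0^T\int_D|\nabla T_k(u_n)|^p\,dx\,ds\le k\,\mathbb{E}\Vert u_0\Vert_{L^1}+\tfrac12 L^2k^2T|D|=:C_p(k)$ uniformly in $n$, and H\"{o}lder's inequality on $\Omega\times Q_T$ then yields $(i)$ with $C(k)=C_p(k)^{1/p}(T|D|)^{1/p'}$.

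Part $(ii)$ follows the identical scheme with $S(r)=\int_0^r\theta_k^{k'}(s)\,ds$, for which $S'=\theta_k^{k'}$ satisfies $|\theta_k^{k'}|\le k'$, $S'(0)=0$ and $S''=\chi_{\{k<|r|<k+k'\}}$; here $\nabla\theta_k^{k'}(u_n)=\chi_{\{k<|u_n|<k+k'\}}\nabla u_n$ produces $\int_D|\nabla\theta_k^{k'}(u_n)|^p\,dx$ in the energy pairing, the initial term is controlled by $k'|u_0|$, and on $\{k<|u_n|<k+k'\}$ one has $|\Phi(u_n)|\le L(k+k')$, giving a uniform $L^p$-bound followed again by H\"{o}lder. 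I expect the only genuine point requiring care to be the justification that Proposition \ref{Proposition 4.2} is applicable to each $u_n$, namely the integrability of $G$, $f$, $g$ and the finiteness of $\operatorname{ess\,sup}_{t}\mathbb{E}\Vert u_n(t)\Vert_{L^1}$, together with keeping every constant independent of $n$, which rests on the comparison $|u_0^n|\le|u_0|$ and on the truncation confining $u_n$ to a region where $\Phi$ is controlled via $(A1)$ and $(A2)$.
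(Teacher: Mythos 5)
Your proof is correct and follows essentially the same route as the paper: apply the generalized It\^{o} formula of Proposition \ref{Proposition 4.2} with $S(r)=\int_0^r T_k$, $\psi\equiv 1$, take expectations, discard the nonnegative term, and bound the It\^{o} correction via the Lipschitz property of $\Phi$ and the initial term via $|u_0^n|\le|u_0|$, yielding the same constant $C(k)=k\Vert u_0\Vert_{L^1(\Omega\times D)}+\tfrac12 L^2k^2T|D|$. The only (harmless) differences are that you add an explicit H\"{o}lder step to pass from the $L^p$- to the $L^1$-bound, and for $(ii)$ you rerun the It\^{o} argument with $S'=\theta_k^{k'}$ whereas the paper simply deduces $(ii)$ from $(i)$.
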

\begin{proof}
Since $u_n$ is a strong solution to \eqref{1} with initial value $u_0^n$, $u_n$ satisfies the equality
\begin{align}\label{un}
u_n(t) - u_0^n - \int_0^t \textnormal{div}\,(|\nabla u_n|^{p-2} \nabla u_n)\, ds = \int_0^t \Phi(u_n) \,d\beta
\end{align}
in $L^2(D)$ for all $t \in [0,T]$ and a.s. in $\Omega$. Applying Proposition \ref{Proposition 4.2} with $S= \int_0^{\cdot} T_k(r) \, dr$ and $\psi \equiv 1$ and taking the expectation yields
\begin{align}\label{200827_01}
&\mathbb{E} \int_D \int_0^{u_n(t)} T_k(r) \,drdx + \mathbb{E}\int_0^t \int_D |\nabla T_k(u_n)|^p \,dx\,ds\nonumber\\
= \frac{1}{2} &\mathbb{E} \int_0^t \int_D  T'_k(u_n) \Phi(u_n)^2 \,dx\,ds + \mathbb{E} \int_D \int_0^{u_0^n} T_k(r) \,dr\,dx
\end{align}
for all $k>0$, all $t \in [0,T]$ and a.s. in $\Omega$. The first term on the left hand side of \eqref{200827_01} is nonnegative. Now, the Lipschitz continuity of $\Phi$ and the estimate $|u_0^n| \leq |u_0|$ yield
\begin{align*}
\mathbb{E}\int_0^T \int_D |\nabla T_k(u_n)|^p \,dx\,ds \leq \frac{TL^2 k^2 |D|}{2} + k \Vert u_0 \Vert_{L^1(\Omega \times D)} =: C(k),
\end{align*}
where $L$ is the Lipschitz constant of $\Phi$. This proves $(i)$ and assertion $(ii)$ is a direct consequence of $(i)$.
\end{proof}
\begin{lem}\label{Lemma 3}
Passing to not relabelded subsequences if necessary, we have the following convergence results:
\begin{itemize}
\item[(i)] There exists a progressively measurable process $u: \Omega \times [0,T] \to L^1(D)$ such that $u \in L^1(\Omega \times Q_T)$, $u_n \to u $ in $L^1(\Omega \times Q_T)$ and $u_n(t) \to u(t)$ in $L^1(\Omega\times D)$ for a.e. $t\in (0,T)$ and in $L^1(D)$ a.e. in $\Omega\times (0,T)$.\\ 
Moreover, $\operatorname{ess}\sup_{t\in (0,T)}\mathbb{E}\Vert u(t)\Vert_{L^1}<+\infty$.
\item[(ii)] $\nabla T_k(u_n) \rightharpoonup \nabla T_k(u)$ in $L^p(\Omega \times Q_T)^d$ for all $k>0$,
\item[(iii)] $\nabla \theta_k^{k'}(u_n) \rightharpoonup \nabla \theta_k^{k'}(u)$ in $L^p(\Omega \times Q_T)^d$ for all $k,k'>0$,
\item[(iv)] There exists $\sigma_k \in L^{p'}(\Omega \times Q_T)^d$ such that $|\nabla T_k(u_n)|^{p-2} \nabla T_k(u_n) \rightharpoonup \sigma_k$ in $L^{p'}(\Omega \times Q_T)^d$ satisfying $\sigma_k= \sigma_{k+1} \chi_{\{|u|<k\}}$ on $\{|u| \neq k\}$ for all $k>0$.
\item[(v)] There exists $\tilde{\sigma}_k^{k'} \in L^{p'}(\Omega \times Q_T)^d$ such that $|\nabla \theta_k^{k'}(u_n)|^{p-2} \nabla \theta_k^{k'}(u_n) \rightharpoonup \tilde{\sigma}_k^{k'}$ in $L^{p'}(\Omega \times Q_T)^d$ satisfying $\tilde{\sigma}_k^{k'}= \tilde{\sigma}_{k-1}^{k'+2} \chi_{\{k<|u|<k+k'\}}$ on $\{|u| \neq k\} \cap \{|u| \neq k+k'\}$ for all $k,k'>0$.
\item[(vi)] We have
\begin{align}\label{23}
\lim\limits_{n,m \to \infty} \mathbb{E} \int_0^T \int_D (|\nabla u_n|^{p-2} \nabla u_n - |\nabla u_m|^{p-2} \nabla u_m) \cdot \nabla T_k(u_n - u_m) \,dx\,dt=0
\end{align}
for all $k>0$.
\end{itemize}
\end{lem}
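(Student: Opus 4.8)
The plan is to obtain (i) from the contraction principle Theorem~\ref{Theorem 3.1} and then to bootstrap the weak-convergence and structure statements (ii)--(vi) from (i) together with the truncated a priori bounds of Lemma~\ref{Lemma 2}. First I would apply Theorem~\ref{Theorem 3.1} to the strong solutions $u_n$ and $u_m$ with initial data $u_0^n$ and $u_0^m$, which gives
\[\sup_{t\in[0,T]}\mathbb{E}\int_D |u_n(t)-u_m(t)|\,dx \leq \mathbb{E}\int_D |u_0^n-u_0^m|\,dx.\]
Since $(u_0^n)_n$ converges in $L^1(\Omega\times D)$ it is Cauchy there, so the right-hand side tends to $0$; hence $(u_n)_n$ is Cauchy uniformly in $t$ with values in $L^1(\Omega\times D)$, and in particular in $L^1(\Omega\times Q_T)$. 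This produces the limit $u$, the convergence $u_n\to u$ in $L^1(\Omega\times Q_T)$ and $u_n(t)\to u(t)$ in $L^1(\Omega\times D)$ for every $t$, and the a.e.\ convergences after passing to a not relabelled subsequence. Progressive measurability of $u$ is inherited from that of the $u_n$, and the uniform bound $\operatorname{ess}\sup_t\mathbb{E}\|u(t)\|_{L^1}<\infty$ follows from a second application of Theorem~\ref{Theorem 3.1} with $v_0=0$ and $v\equiv 0$ (admissible by $(A1)$), which yields $\sup_t\mathbb{E}\|u_n(t)\|_{L^1}\leq\|u_0\|_{L^1(\Omega\times D)}$, and then passing to the limit.

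Next, for (ii)--(v) I would use that the proof of Lemma~\ref{Lemma 2} in fact provides the $L^p$-bound $\mathbb{E}\int_{Q_T}|\nabla T_k(u_n)|^p\,dx\,dt\leq C(k)$, so $(\nabla T_k(u_n))_n$ is bounded in $L^p(\Omega\times Q_T)^d$ and a subsequence converges weakly to some $\xi_k$. Because $T_k$ is bounded and continuous and $u_n\to u$ a.e., dominated convergence gives $T_k(u_n)\to T_k(u)$ strongly in $L^p(\Omega\times Q_T)$, and closedness of the gradient operator identifies $\xi_k=\nabla T_k(u)$; this proves (ii) and shows $T_k(u)\in L^p(\Omega;L^p(0,T;W_0^{1,p}(D)))$, and the same argument with the bounded Lipschitz function $\theta_k^{k'}$ gives (iii). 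For (iv)--(v), the identity $\||\nabla T_k(u_n)|^{p-2}\nabla T_k(u_n)\|_{L^{p'}}^{p'}=\|\nabla T_k(u_n)\|_{L^p}^p$ yields boundedness in $L^{p'}(\Omega\times Q_T)^d$ and hence a weak limit $\sigma_k$ (resp.\ $\tilde\sigma_k^{k'}$). The consistency relations come from writing $|\nabla T_k(u_n)|^{p-2}\nabla T_k(u_n)=\chi_{\{|u_n|<k\}}|\nabla T_{k+1}(u_n)|^{p-2}\nabla T_{k+1}(u_n)$ and using $\chi_{\{|u_n|<k\}}\to\chi_{\{|u|<k\}}$ a.e.\ on $\{|u|\neq k\}$: testing against an $L^p$-function, the bounded factor converges strongly against the test function by dominated convergence while the flux converges weakly, which identifies $\sigma_k=\sigma_{k+1}\chi_{\{|u|<k\}}$ on $\{|u|\neq k\}$, and analogously for (v).

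Finally, for (vi) I would subtract the equations \eqref{un} for $u_n$ and $u_m$ and apply the It\^{o} formula as in the proof of Theorem~\ref{Theorem 3.1} with $S_k(r)=\int_0^r T_k(s)\,ds$, so that $S_k'=T_k$, $S_k''=\chi_{(-k,k)}$, and $\psi\equiv1$. Taking the expectation (so the stochastic integral vanishes) and discarding the nonnegative term $\mathbb{E}\int_D S_k(u_n(T)-u_m(T))\,dx$ gives
\begin{align*}
&\mathbb{E}\int_0^T\int_D (|\nabla u_n|^{p-2}\nabla u_n-|\nabla u_m|^{p-2}\nabla u_m)\cdot\nabla T_k(u_n-u_m)\,dx\,ds\\
&\leq \mathbb{E}\int_D S_k(u_0^n-u_0^m)\,dx+\tfrac12\mathbb{E}\int_0^T\int_D \chi_{\{|u_n-u_m|<k\}}(\Phi(u_n)-\Phi(u_m))^2\,dx\,ds.
\end{align*}
Here $S_k(r)\leq k|r|$ bounds the first term by $k\|u_0^n-u_0^m\|_{L^1(\Omega\times D)}$, while $(A2)$ and the elementary estimate $\chi_{\{|r|<k\}}r^2\leq k|r|$ bound the second by $\tfrac{L^2k}{2}\|u_n-u_m\|_{L^1(\Omega\times Q_T)}$; both tend to $0$ as $n,m\to\infty$, and since the left-hand integrand is nonnegative by monotonicity of $\xi\mapsto|\xi|^{p-2}\xi$, assertion (vi) follows.

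The main obstacle is (vi): no global $L^p$-bound on $\nabla u_n$ is available, so this energy-type quantity cannot be estimated directly but must be extracted from the equation for $u_n-u_m$ through the It\^{o} formula. This is precisely where the multiplicative noise enters, via the It\^{o} correction $\tfrac12\chi_{\{|u_n-u_m|<k\}}(\Phi(u_n)-\Phi(u_m))^2$; controlling it hinges on combining the Lipschitz assumption $(A2)$ with the truncation to recover the factor $|u_n-u_m|$ that produces smallness from the $L^1$-convergence established in the first step. A secondary technical point is the weak-times-a.e.\ passage underlying the consistency relations in (iv)--(v), where care is needed near the level sets $\{|u|=k\}$.
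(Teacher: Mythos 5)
Your proposal is correct and follows essentially the same route as the paper: the contraction principle plus the comparison with the zero solution for (i), boundedness of the truncated gradients combined with a.e.\ convergence and the weak-times-strong testing argument for (ii)--(v), and the It\^{o} formula with $S=\tilde{T}_k$, $\psi\equiv 1$ applied to the difference equation for (vi). The only (harmless) deviation is in the It\^{o} correction term of (vi), where you use the quantitative bound $\chi_{\{|u_n-u_m|<k\}}(\Phi(u_n)-\Phi(u_m))^2\leq L^2k|u_n-u_m|$ together with the $L^1$-Cauchy property, whereas the paper bounds it by $L^2k^2$ and invokes dominated convergence; both close the argument.
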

\begin{rem}
Condition $(ii)$ of Lemma \ref{Lemma 3} implies that condition $(i)$ from Definition \ref{Definition 5.1} is satisfied for $u$.
\end{rem}
\begin{proof}
Theorem \ref{Theorem 3.1} yields that
\begin{align*}
\mathbb{E} \int_D |u_n(t) - u_m(t)| \,dx \leq \mathbb{E} \int_D |u_0^n - u_0^m| \,dx \to 0
\end{align*}
as $n,m \to \infty$ for all $t \in [0,T]$. Hence $u_n$ is a Cauchy sequence in $L^1(\Omega \times Q_T)$ and $u_n(t)$ is a Cauchy sequence in $L^1(\Omega \times D)$ for all $t \in [0,T]$. Hence there exists a progessively measurable process $u: \Omega \times [0,T] \to L^1(D)$ with $u\in L^1(\Omega \times Q_T)$ such that, for $n\rightarrow\infty$, $u_n\rightarrow u$ in $L^1(\Omega\times Q_T)$ and $u_n(t) \to u(t)$ in $L^1(\Omega \times D)$ a.e. in $(0,T)$, passing to a suitable subsequence if necessary, also in $L^1(D)$, a.s. in $\Omega\times (0,T)$.\\
Now we observe that, thanks to $(A1)$, $v\equiv 0$ is the unique solution of \eqref{1} with initial value $v_0\equiv 0$. Applying Theorem \ref{Theorem 3.1} with $u=u_n$, $u_0=u_0^n$ and $v=v_0\equiv 0$ it follows that
\[\sup_{t\in [0,T]}\mathbb{E}\Vert u_n(t)\Vert_{L^1}<+\infty\]
uniformly in $n\in\mathbb{N}$. Hence the last claim of assertion $(i)$ follows from Fatou's Lemma.\\
Since $\nabla T_k(u_n)$ is bounded in $L^p(\Omega \times Q_T)^d$ and $u_n \to u$ in $L^1(\Omega \times Q_T)$, assertion $(ii)$ follows. Using similar arguments we may conclude $(iii)$. In order to prove $(iv)$ and $(v)$ we only have to show that $\sigma_k= \sigma_{k+1} \chi_{\{|u|<k\}}$ on $\{|u| \neq k\}$ for all $k>0$ and $\tilde{\sigma}_k^{k'}= \tilde{\sigma}_{k-1}^{k'+2} \chi_{\{k<|u|<k+k'\}}$ on $\{|u| \neq k\} \cap \{|u| \neq k+k'\}$ for all $k,k'>0$. Let $\psi \in L^p(\Omega \times Q_T)^d$. Then
\begin{align*}
\lim\limits_{n \to \infty} \mathbb{E} \int_{Q_T} |\nabla T_k(u_n)|^{p-2} \nabla T_k(u_n) \cdot \psi \cdot \chi_{\{|u| \neq k \}} \,dx\,dt = \mathbb{E} \int_{Q_T} \sigma_k \psi \cdot \chi_{\{|u| \neq k \}} \,dx\,dt.
\end{align*}
On the other hand we know that $u_n \to u$ a.e. in $\Omega \times Q_T$ for a subsequence. Hence, we have $\chi_{\{|u_n|<k \}} \to \chi_{\{|u|<k \}}$ a.e. in $\{|u| \neq k \}$. Lebesgue's Theorem yields
\begin{align*}
\chi_{\{|u_n|<k \}} \cdot \chi_{\{|u| \neq k \}} \cdot \psi \to \chi_{\{|u|<k \}} \cdot \chi_{\{|u|\neq k \}} \cdot \psi ~~~\textnormal{in}~ L^p(\Omega \times Q_T)^d.
\end{align*}
We may conclude that
\begin{align*}
&\lim\limits_{n \to \infty} \mathbb{E} \int_{Q_T} |\nabla T_k(u_n)|^{p-2} \nabla T_k(u_n) \cdot \psi \cdot \chi_{\{|u| \neq k \}} \,dx\,dt  \\
= &\lim\limits_{n \to \infty} \mathbb{E} \int_{Q_T} |\nabla T_{k+1}(u_n)|^{p-2} \nabla T_{k+1}(u_n) \cdot \psi \cdot \chi_{\{|u| \neq k \}} \cdot \chi_{\{|u_n|<k \}} \,dx\,dt \\
= & \mathbb{E} \int_{Q_T} \sigma_{k+1} \psi \chi_{\{|u| \neq k \}} \cdot \chi_{\{|u|<k \}} \,dx\,dt.
\end{align*}
Therefore it follows that $\sigma_k= \chi_{\{|u| < k\}} \sigma_{k+1}$ a.e. on $\{|u| \neq k \}$ which concludes $(iv)$. Since we have $\nabla \theta_k^{k'}(u_n) = \nabla T_{k+k'}(u_n) \chi_{\{k < |u_n| < k+k'\}}$, similar reasoning as in $(iv)$ yields $(v)$.\\
In order to prove $(vi)$ we apply Proposition \ref{Proposition 4.2} with $S= \tilde{T}_k:= \int_0^{\cdot} T_k(r) \, dr$ and $\psi \equiv 1$ to the equality
\begin{align}\label{blaa}
&u_n(t) - u_m(t) = u_0^n - u_0^m \\
+ &\int_0^t \textnormal{div} (|\nabla u_n|^{p-2} \nabla u_n - |\nabla u_m|^{p-2} \nabla u_m) \,ds + \int_0^t \Phi(u_n) - \Phi(u_m) \, d\beta. \notag
\end{align}
Applying the expectation afterwards yields
\begin{align*}
&\mathbb{E} \int_D \tilde{T}_k(u_n(T) - u_m(T)) \,dx \\
+ &\mathbb{E} \int_0^T \int_D (|\nabla u_n|^{p-2} \nabla u_n - |\nabla u_m|^{p-2} \nabla u_m) \cdot \nabla T_k(u_n - u_m) \,dx\,dt \\
= &\mathbb{E} \int_D \tilde{T}_k(u_0^n - u_0^m) \,dx 
+ \frac{1}{2} \mathbb{E}\int_0^T \int_D \chi_{\{|u_n-u_m|<k\}} (\Phi(u_n) - \Phi(u_m))^2 \, dx \, dt.
\end{align*}
Since the first term on the left hand side is nonnegative and $\chi_{\{|u_n-u_m|<k\}} (\Phi(u_n) - \Phi(u_m))^2 \leq L^2k^2$ a.e. on $\Omega \times Q_T$, where $L$ is a Lipschitz constant of $\Phi$, Lebesgue's Dominated Convergence Theorem yields assertion $(vi)$.
\end{proof}
In the following, we will show that the process $u$ from Lemma \ref{Lemma 3} $(i)$ is a renormalized solution to \eqref{1} with initial datum $u_0$ in the sense of Definition \ref{Definition 5.1}.
\begin{lem}\label{200827_lem1}
The function $u$ from Lemma \ref{Lemma 3} satisfies condition $(ii)$ from Definition \ref{Definition 5.1}.
\end{lem}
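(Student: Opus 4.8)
The plan is to exploit that each $u_n$ is a strong solution and therefore, by Proposition~\ref{Proposition 4.2} applied with $G=|\nabla u_n|^{p-2}\nabla u_n$, $f\equiv 0$ and $g=\Phi(u_n)$, satisfies the renormalized identity \eqref{reneq190204} with $u,u_0$ replaced by $u_n,u_0^n$, for every admissible pair $(S,\psi)$ and all $t\in[0,T]$, a.s. in $\Omega$. Fixing such a pair, I choose $M>0$ with $\operatorname{supp}S'\subset[-M,M]$; then $S$ is bounded, $S'$ and $S''$ are supported in $[-M,M]$, and wherever $S'(u_n)\neq 0$ or $S''(u_n)\neq 0$ we have $|u_n|\leq M$, so that $\nabla u_n=\nabla T_M(u_n)$ a.e.\ and $|\Phi(u_n)|\leq LM$ there. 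Hence every nonlinear term only sees the truncation $T_M(u_n)$, for which Lemma~\ref{Lemma 2} and Lemma~\ref{Lemma 3} apply, and the whole proof reduces to passing to the limit $n\to\infty$ in the six terms of \eqref{reneq190204}.

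I expect the delicate point to be the two flux terms $\int_0^t\!\int_D S''(u_n)|\nabla T_M(u_n)|^p\psi$ and $\int_0^t\!\int_D S'(u_n)|\nabla T_M(u_n)|^{p-2}\nabla T_M(u_n)\cdot\nabla\psi$, because the weak convergence of Lemma~\ref{Lemma 3}(ii),(iv) is by itself insufficient for the nonlinear expressions $|\nabla T_M(u_n)|^p$ and $|\nabla T_M(u_n)|^{p-2}\nabla T_M(u_n)$. The key step is therefore to upgrade it to the strong convergence
\[\nabla T_k(u_n)\longrightarrow\nabla T_k(u)\quad\text{in }L^p(\Omega\times Q_T)^d\]
for all $k>0$. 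This is exactly the role of Lemma~\ref{Lemma 3}(vi): writing $a(\xi)=|\xi|^{p-2}\xi$ and $\nabla T_k(u_n-u_m)=(\nabla u_n-\nabla u_m)\chi_{\{|u_n-u_m|<k\}}$, the integrand of \eqref{23} is nonnegative by monotonicity of $a$, so \eqref{23} forces it to zero in $L^1(\Omega\times Q_T)$; together with the a.e.\ convergence $u_n\to u$ of Lemma~\ref{Lemma 3}(i) (which gives $\chi_{\{|u_n-u_m|<k\}}\to 1$ a.e.) and the strict monotonicity of $a$, a Minty--Browder type argument yields a.e.\ convergence of $\nabla T_k(u_n)$ together with convergence of the energies, whence strong $L^p$-convergence by uniform convexity. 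This in particular identifies the weak limit of Lemma~\ref{Lemma 3}(iv) as $\sigma_k=|\nabla T_k(u)|^{p-2}\nabla T_k(u)$ and gives $a(\nabla T_M(u_n))\to a(\nabla T_M(u))$ in $L^{p'}(\Omega\times Q_T)^d$. I regard establishing this strong convergence as the main obstacle.

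With the strong convergence in hand I would pass to the limit termwise. The zeroth-order terms $\int_D S(u_n(t))\psi(t)$, $\int_D S(u_0^n)\psi(0)$ and $\int_0^t\!\int_D S(u_n)\psi_t$ converge by dominated convergence, since $S$ is bounded and continuous, $u_n(t)\to u(t)$ in $L^1(D)$ a.e.\ in $\Omega\times(0,T)$ and $u_0^n\to u_0$ in $L^1(\Omega\times D)$. The It\^o correction $\tfrac12\int_0^t\!\int_D S''(u_n)\psi\Phi(u_n)^2$ converges likewise, as $S''$ is continuous and bounded and $\Phi(u_n)$ is bounded on $\operatorname{supp}S''$. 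For the stochastic term the integrand $g_n=\int_D S'(u_n)\psi\Phi(u_n)\,dx$ is uniformly bounded and converges a.e.\ in $\Omega\times(0,T)$ to $g=\int_D S'(u)\psi\Phi(u)\,dx$, so $g_n\to g$ in $L^2(\Omega\times(0,T))$, and the It\^o isometry with Doob's inequality yields $\int_0^\cdot g_n\,d\beta\to\int_0^\cdot g\,d\beta$ in $L^2(\Omega;\mathcal C([0,T]))$. The two flux terms converge in $L^1(\Omega\times Q_T)$: in the first, $|\nabla T_M(u_n)|^p\to|\nabla T_M(u)|^p$ in $L^1$ by strong convergence while $S''(u_n)\psi\to S''(u)\psi$ boundedly a.e.; in the second, $a(\nabla T_M(u_n))\to a(\nabla T_M(u))$ in $L^{p'}$ is paired with $S'(u_n)\nabla\psi\to S'(u)\nabla\psi$ in $L^p$, and on $\operatorname{supp}S'(u)$ one has $\nabla T_M(u)=\nabla u$ so the limits take the form of \eqref{reneq190204}.

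Finally I would organize the modes of convergence to obtain the identity a.s.\ in $\Omega\times(0,T)$. Each time integral $\int_0^t(\cdots)\,ds$ converges in $L^1(\Omega\times(0,T))$ because its integrand converges in $L^1(\Omega\times Q_T)$, the stochastic integral converges uniformly in $t$ in $L^2(\Omega;\mathcal C([0,T]))$, and the pointwise term $\int_D S(u_n(t))\psi(t)\,dx$ converges for a.e.\ $(\omega,t)$. Extracting one further subsequence along which all of these hold a.e.\ in $\Omega\times(0,T)$, I conclude that \eqref{reneq190204} holds for $u$ and $u_0$ for a.e.\ $(\omega,t)\in\Omega\times(0,T)$, which is precisely condition~(ii) of Definition~\ref{Definition 5.1}.
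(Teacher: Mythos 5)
Your overall architecture is the right one and matches the paper's: write the renormalized identity \eqref{190206_03} for each strong solution $u_n$ via Proposition \ref{Proposition 4.2}, observe that all nonlinear terms only see $T_M(u_n)$ for $M$ covering $\operatorname{supp}S'$, reduce everything to the strong convergence $T_k(u_n)\to T_k(u)$ in $L^p(\Omega;L^p(0,T;W^{1,p}_0(D)))$, and then pass to the limit termwise (your treatment of the zeroth-order terms, the It\^{o} correction and the stochastic integral is fine). The problem is the step you yourself flag as the main obstacle: you claim the strong convergence of $\nabla T_k(u_n)$ follows from Lemma \ref{Lemma 3}$(vi)$ together with a Minty--Browder argument, and this is a genuine gap. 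The quantity controlled by \eqref{23} involves $\nabla T_k(u_n-u_m)=(\nabla u_n-\nabla u_m)\chi_{\{|u_n-u_m|<k\}}$, which is \emph{not} $\nabla T_k(u_n)-\nabla T_k(u_m)$. The monotonicity expression for the truncations decomposes as $I_k^{n,m}+J_k^{n,m}+J_k^{m,n}$, where only the part $I_k^{n,m}$ over $\{|u_n|\le k\}\cap\{|u_m|\le k\}$ is dominated by the integrand of \eqref{23} (at level $2k$). The mismatch terms $J_k^{n,m}=\int_{\{|u_n|\le k\}\cap\{|u_m|>k\}}|\nabla u_n|^p$ are not differences at all and receive no control from \eqref{23}: on that set $(a(\nabla u_n)-a(\nabla u_m))\cdot(\nabla u_n-\nabla u_m)$ being small tells you $\nabla u_n\approx\nabla u_m$, not that $|\nabla u_n|^p$ is small. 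This is precisely why the paper needs Lemma \ref{Lemma 5.6} (based on the It\^{o} product rule for $Z(u_n-u_m)H(u_n)$, Proposition \ref{itoproduct}) and the further splitting $J_k^{n,m}=J_{1,k,k'}^{n,m}+J_{2,k,k'}^{n,m}$ in Lemma \ref{Lemma 6.3}, where $J_2$ is handled with the weight $H_k^\delta$ and a fresh energy estimate on $\{k\le|u_n|\le k+\frac1\delta\}$ that uses the boundedness of $\Phi$. Your argument skips all of this.

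Two further points on the same step. First, even granting a.e.\ convergence of $\nabla T_k(u_n)$, your route to strong $L^p$ convergence via uniform convexity requires convergence of the norms $\Vert\nabla T_k(u_n)\Vert_{L^p}\to\Vert\nabla T_k(u)\Vert_{L^p}$; in the paper this comes from \eqref{25}, which is itself a consequence of the full statement \eqref{22} of Lemma \ref{Lemma 6.3} --- so you would be assuming what you need to prove. Second, extracting a.e.\ convergence of the integrand from the doubly indexed condition \eqref{23} (a single subsequence along which $F_{n,m}\to0$ a.e.\ for \emph{all} pairs $n,m\to\infty$, as a pointwise Cauchy argument for $\nabla u_n$ would require) is not justified by passing to a subsequence in $L^1$; this needs an argument, and the paper avoids it entirely by working with integral identities rather than pointwise limits of the monotonicity expression.
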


\begin{proof}
Let $u_n$ be a strong solution to (\ref{1}) with initial value $u_0^n$, i.e.,
\begin{align}\label{20}
u_n(t) - u_0^n - \int_0^t \textnormal{div}\, (|\nabla u_n|^{p-2} \nabla u_n) \,ds = \int_0^t \Phi(u_n) \,d\beta
\end{align}
for all $t \in [0,T]$ and a.s. in $\Omega$. 
We apply the It\^{o} formula introduced in Proposition \ref{Proposition 4.2} to equality \eqref{20}. Therefore we know that for all $\psi \in C^{\infty}([0,T] \times \overline{D})$ and all $S \in W^{2,\infty}(\mathbb{R})$ such that $S''$ is piecewise continuous and $S'(0)=0$ or $\psi(t,x) =0$ for all $(t,x) \in [0,T] \times\partial D$ the equality
\begin{align}\label{190206_03}
&\int_D S(u_n(t))\psi(t) - S(u_0^n) \psi(0) \,dx + \int_0^t \int_D S''(u_n) |\nabla u_n|^p \psi \,dx\,ds \notag \\
&+ \int_0^t \int_D S'(u_n) |\nabla u_n|^{p-2} \nabla u_n \cdot \nabla \psi \,dx\,ds\\
= &\int_0^t \int_D S'(u_n) \psi \Phi(u_n) \,dx\,d\beta + \int_0^t \int_D S(u_n) \psi_t \,dx\,ds \notag \\
+ &\frac{1}{2} \int_0^t \int_D  S''(u_n) \psi \Phi(u_n)^2 \,dx\,ds \notag
\end{align}
holds true for all $t \in [0,T]$ and a.s. in $\Omega$. In the following, passing to a suitable, not relabeled subsequence if necessary, and taking the limit for $n\rightarrow\infty$, we will show that \eqref{190206_03} is also satisfied by $u$ and $u_0$ respectively and therefore $(ii)$ from Definition \ref{Definition 5.1} holds. To this end, it is left to show that
\begin{align*}
T_k(u_n) \to T_k(u)~~~ \textnormal{in} ~~~L^p(\Omega; L^p(0,T;W_0^{1,p}(D)))
\end{align*}
for all $k>0$.
Similar as in \cite{NSAZ}, the upcoming technical lemmas which are inspired by Theorem 2 and Lemma 2 in \cite{DB} will show that this convergence holds.
\end{proof}
\begin{lem}\label{Lemma 5.6}
For $n\in\mathbb{N}$, let $u_n$ be a strong solution to \eqref{1} with respect to the initial value $u_0^n$. Let $H$ and $Z$ be two real valued functions belonging to $W^{2,\infty}(\mathbb{R})$ such that $H''$ and $Z''$ are piecewise continuous, $H'$ and $Z'$ have compact supports and $Z(0)=Z'(0)=0$ is satisfied. Then
\begin{align}\label{7}
\lim\limits_{n,m \to \infty} \mathbb{E} \int_0^T \int_D H''(u_n) Z(u_n - u_m) |\nabla u_n|^p \,dx\,dt =0.
\end{align}
\end{lem}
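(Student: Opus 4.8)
The plan is to realize the stochastic analogue of the classical testing argument of \cite{DB}: the target integral is what one formally obtains by testing the equation for $u_n$ with $H'(u_n)Z(u_n-u_m)$, and since the time-derivative and stochastic terms forbid a random test function, I would instead apply a two-variable version of the It\^o formula of Proposition \ref{Proposition 4.2} to the functional
\begin{align*}
t\mapsto \int_D F(u_n(t),u_m(t))\,dx,\qquad F(a,b):=H(a)Z(a-b),
\end{align*}
for the pair $(u_n,u_m)$ of strong solutions driven by the same $\beta$; equivalently one applies It\^o's product rule to the renormalized equations \eqref{SPDE1} for $H(u_n)$ and for $Z(u_n-u_m)$. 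Fix $M,K>0$ with $\operatorname{supp}H'\subset[-M,M]$ and $\operatorname{supp}Z'\subset[-K,K]$. The function $F$ is admissible: $Z(0)=0$ forces $F(u_n,u_m)$ to have zero trace, and $F$ has bounded, piecewise continuous second derivatives, so the formula applies after the same approximation as in \cite{NSAZ} (justifying it for non-smooth $F$ is the first technical point).

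Writing $w:=u_n-u_m$, $A_n:=|\nabla u_n|^{p-2}\nabla u_n$ and $A_m:=|\nabla u_m|^{p-2}\nabla u_m$, integrating the two $p$-Laplace drifts by parts and taking expectations produces an identity (the martingale term having vanishing expectation)
\begin{align*}
&\mathbb{E}\int_D\big[F(u_n(T),u_m(T))-F(u_0^n,u_0^m)\big]\,dx\\
&=-\mathbb{E}\int_0^T\!\!\int_D H''(u_n)Z(w)|\nabla u_n|^p\,dx\,dt+\mathbb{E}\int_0^T\!\!\int_D\mathcal R_{n,m}\,dx\,dt+\tfrac12\,\mathbb{E}\int_0^T\!\!\int_D\mathcal I_{n,m}\,dx\,dt,
\end{align*}
where the gradient remainder regroups as
\begin{align*}
\mathcal R_{n,m}=-\big(H(u_n)Z''(w)+H'(u_n)Z'(w)\big)(A_n-A_m)\cdot\nabla w-H'(u_n)Z'(w)\big(|\nabla u_n|^p-|\nabla u_m|^p\big),
\end{align*}
and the It\^o correction is
\begin{align*}
\mathcal I_{n,m}=H''(u_n)Z(w)\Phi(u_n)^2+2H'(u_n)Z'(w)\Phi(u_n)(\Phi(u_n)-\Phi(u_m))+H(u_n)Z''(w)(\Phi(u_n)-\Phi(u_m))^2.
\end{align*}
The decisive algebraic point is that the $Z''$- and part of the $Z'$-contributions collapse into the monotone factor $(A_n-A_m)\cdot\nabla w$.

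It then remains to let $n,m\to\infty$ along the subsequence of Lemma \ref{Lemma 3}. Since $w\to0$ a.e.\ and in $L^1$ and $\Phi$ is Lipschitz and bounded, $\Phi(u_n)-\Phi(u_m)\to0$ boundedly, so $\mathcal I_{n,m}\to0$ by dominated convergence --- the only summand whose prefactor $Z''(w)$ does not vanish is rescued by the factor $(\Phi(u_n)-\Phi(u_m))^2$. The boundary term tends to $0$ by Theorem \ref{Theorem 3.1} together with $|Z(w)|\le\|Z'\|_\infty|w|$ and $Z(0)=0$. For $\mathcal R_{n,m}$ the compact supports of $Z'$ and $Z''$ localize $\nabla w$ to $\nabla T_K(u_n-u_m)$, so the first group is a bounded multiple of $(A_n-A_m)\cdot\nabla T_K(u_n-u_m)\ge0$ and vanishes by Lemma \ref{Lemma 3}(vi); the last term is supported in $\{|u_n|\le M,\ |w|\le K\}$, where Lemma \ref{Lemma 2} bounds $\nabla u_n,\nabla u_m$ uniformly in $L^p$ and Lemma \ref{Lemma 3}(vi) with the strict monotonicity of $\xi\mapsto|\xi|^{p-2}\xi$ gives $\nabla T_K(u_n-u_m)\to0$ in $L^p$, so that $\mathbb{E}\int_{\{|u_n|\le M,|w|\le K\}}\big||\nabla u_n|^p-|\nabla u_m|^p\big|\to0$ by H\"older. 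Collecting all limits yields \eqref{7}.

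The main obstacle is precisely the remainder $\mathcal R_{n,m}$: because the full energies $|\nabla u_n|^p$ are controlled only in $L^1$, and only after truncation, the naive estimate ``$Z(w)\to0$ against an $L^1$ bound'' fails, and one must reorganize $\mathcal R_{n,m}$ so that every surviving gradient factor is either the nonnegative monotone difference handled by Lemma \ref{Lemma 3}(vi) or a difference of \emph{localized} energies handled by the strong $L^p$-convergence of $\nabla T_K(u_n-u_m)$ and the uniform truncation bounds of Lemma \ref{Lemma 2}. Obtaining that strong convergence in the degenerate range $1<p<2$ requires the familiar refined monotonicity inequality and a further H\"older splitting.
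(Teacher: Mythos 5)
Your proposal is correct and follows essentially the same route as the paper: apply the It\^{o} product rule of Proposition \ref{itoproduct} to $H(u_n)Z(u_n-u_m)$, take expectations so the martingale terms drop, kill the It\^{o}-correction and cross terms using the compact supports of $H',H'',Z',Z''$ together with the Lipschitz bound on $\Phi$ and $u_n-u_m\to0$ in $L^1(\Omega\times Q_T)$, and then dispose of the remaining gradient terms by Blanchard's monotonicity/truncation argument (Lemma \ref{Lemma 3}(vi) plus the truncated-gradient bounds), which the paper simply cites from \cite{NSAZ} and \cite{DB}. Your explicit regrouping of the drift remainder into $(A_n-A_m)\cdot\nabla T_K(u_n-u_m)$ plus a localized difference of energies is algebraically correct and is a faithful rendering of that cited deterministic step, including the extra H\"older/refined-monotonicity work needed for $1<p<2$.
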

\begin{proof}
Using the It\^{o} product rule (see Proposition \ref{itoproduct}) yields
\begin{align}\label{14}
&\int_D Z(u_n(t) - u_m(t)) H(u_n(t)) \,dx= \int_D Z(u_0^n - u_0^m)H(u_0^n) \,dx \notag\\
- &\int_0^t \int_D |\nabla u_n|^{p-2} \nabla u_n \nabla \bigg(Z(u_n - u_m) H'(u_n) \bigg) \,dx\,ds \notag\\
+ &\frac{1}{2} \int_0^t \int_D H''(u_n)Z(u_n - u_m) \Phi(u_n)^2 \,dx\,ds \notag\\
+ &\int_0^t \int_D H'(u_n)Z(u_n - u_m) \Phi(u_n) \,dx\,d\beta \notag\\
- &\int_0^t \int_D (|\nabla u_n|^{p-2} \nabla u_n - |\nabla u_m|^{p-2} \nabla u_m) \nabla \bigg( Z'(u_n - u_m)H(u_n) \bigg) \,dx\,ds\\
+ &\frac{1}{2} \int_0^t \int_D H(u_n) Z''(u_n - u_m)(\Phi(u_n) - \Phi(u_m))^2 \, dx \, ds \notag\\
+ &\int_0^t \int_D H(u_n) Z'(u_n - u_m) (\Phi(u_n) - \Phi(u_m)) \, dx \, d\beta \notag\\
+ &\int_0^t \int_D Z'(u_n - u_m) H'(u_n) (\Phi(u_n) - \Phi(u_m)) \Phi(u_n) \, dx \, ds \notag
\end{align}
for all $t \in [0,T]$ and a.s. in $ \Omega$. Now we set $t=T$ and take the expectation in equality \eqref{14}. Since $Z',Z'',H'$ and $H''$ have compact support and $\Phi$ is Lipschitz continuous it is easy to see that
\begin{align*}
\lim\limits_{n,m \to \infty} \mathbb{E} \int_0^T \int_D H(u_n) Z''(u_n - u_m)(\Phi(u_n) - \Phi(u_m))^2 \, dx \, dt =0,
\end{align*}
\begin{align*}
\lim\limits_{n,m \to \infty} \mathbb{E} \int_0^T \int_D Z'(u_n - u_m) H'(u_n) (\Phi(u_n) - \Phi(u_m)) \Phi(u_n) \, dx \, dt =0
\end{align*}
and
\begin{align*}
\lim\limits_{n,m \to \infty} \mathbb{E} \int_0^T \int_D H''(u_n)Z(u_n - u_m) \Phi(u_n)^2 \,dx\,ds =0.
\end{align*}
From now on the proof is the same as in \cite{NSAZ} or in \cite{DB}, Theorem 2.
\end{proof}
\begin{lem}\label{Lemma 6.3}
For $n\in\mathbb{N}$, let $u_n$ be a strong solution to \eqref{1} with respect to the initial value $u_0^n$. Let $u$ be defined as in Lemma \ref{Lemma 3}. Then,
\begin{align}\label{22}
\lim\limits_{n,m \to \infty} \mathbb{E} &\int_0^T \int_D \bigg( |\nabla T_k(u_n)|^{p-2} \nabla T_k(u_n) - |\nabla T_k(u_m)|^{p-2} \nabla T_k(u_m) \bigg) \cdot \nonumber \\
&\cdot( \nabla T_k(u_n) - \nabla T_k(u_m)) \,dx\,ds=0.
\end{align}
Especially, we have
\begin{align*}
\nabla T_k(u_n) \to \nabla T_k(u)~~~ \textnormal{in} ~~~L^p(\Omega \times Q_T)^d
\end{align*}
and
\begin{align*}
T_k(u_n) \to T_k(u)~~~ \textnormal{in} ~~~L^p(\Omega; L^p(0,T;W_0^{1,p}(D)))
\end{align*}
for $n \to \infty$ and for all $k>0$.
\end{lem}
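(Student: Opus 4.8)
The plan is to prove \eqref{22} first and then read off the two strong convergences from it by a purely algebraic monotonicity argument. Throughout write $a(\xi):=|\xi|^{p-2}\xi$ and note that $a$ is monotone, so the integrand in \eqref{22},
\[
D_{n,m}:=\big(a(\nabla T_k(u_n))-a(\nabla T_k(u_m))\big)\cdot\big(\nabla T_k(u_n)-\nabla T_k(u_m)\big),
\]
is nonnegative a.e. Consequently, once \eqref{22} is established the two displayed limits follow from the standard vector inequalities for the $p$-Laplacian: for $p\ge 2$ one has $D_{n,m}\ge c_p\,|\nabla T_k(u_n)-\nabla T_k(u_m)|^p$, while for $1<p<2$ one uses
\[
|\xi-\eta|^p\le C\,\big[(a(\xi)-a(\eta))\cdot(\xi-\eta)\big]^{p/2}\,(|\xi|+|\eta|)^{\frac{(2-p)p}{2}}
\]
together with H\"older's inequality with exponents $2/p$ and $2/(2-p)$ and the uniform bound $\mathbb{E}\int_0^T\int_D|\nabla T_k(u_n)|^p\,dx\,dt\le C(k)$ from (the proof of) Lemma \ref{Lemma 2}. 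In either case $(\nabla T_k(u_n))_n$ is Cauchy, hence strongly convergent in $L^p(\Omega\times Q_T)^d$, and its limit is identified as $\nabla T_k(u)$ through the weak convergence of Lemma \ref{Lemma 3}$(ii)$. Since $T_k(u_n)\to T_k(u)$ in $L^p(\Omega\times Q_T)$ (by $u_n\to u$ in $L^1$, the boundedness of $T_k$ and dominated convergence), Poincar\'e's inequality upgrades this to convergence in $L^p(\Omega;L^p(0,T;W_0^{1,p}(D)))$.

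It therefore remains to prove \eqref{22}, and since $D_{n,m}\ge 0$ it suffices to show $\limsup_{n,m\to\infty}\mathbb{E}\int_{Q_T}D_{n,m}\,dx\,dt\le 0$. The starting point is Lemma \ref{Lemma 3}$(vi)$: the limit \eqref{23} is obtained from the renormalized It\^o identity \eqref{14} in the degenerate case $H\equiv 1$, $Z=\tilde T_k$, and it controls the contribution of $D_{n,m}$ on the region where the two truncations are simultaneously inactive, namely $\{|u_n|<k\}\cap\{|u_m|<k\}$: there $D_{n,m}=(a(\nabla u_n)-a(\nabla u_m))\cdot(\nabla u_n-\nabla u_m)$, and since this set is contained in $\{|u_n-u_m|<2k\}$ and the integrand is nonnegative, the contribution is dominated by the quantity in \eqref{23} taken at level $2k$, which tends to $0$.

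The genuine difficulty lies in passing from this joint truncation to the separate truncations in \eqref{22}, that is, in the transition regions $\{|u_n|<k\le|u_m|\}$ and $\{|u_m|<k\le|u_n|\}$, where $D_{n,m}$ collapses to $|\nabla u_n|^p$, respectively $|\nabla u_m|^p$, and where \eqref{23} carries no information. To handle them I would run the renormalized It\^o identity \eqref{14} with a nontrivial localizing $H$ (an antiderivative of an $h_l$-type cut-off, so that $H''$ is supported in a bounded set capturing $\{|u_n|\lesssim k\}$) and a comparison $Z=\tilde T_k$. Letting $n,m\to\infty$, the initial and terminal terms vanish because $Z(u_n-u_m)\to Z(0)=0$, the stochastic integrals and the It\^o correction terms vanish by the Lipschitz continuity and boundedness of $\Phi$ exactly as in the proof of Lemma \ref{Lemma 5.6}, and the offending energy term $\mathbb{E}\int_0^T\int_D H''(u_n)Z(u_n-u_m)|\nabla u_n|^p\,dx\,ds$ vanishes by Lemma \ref{Lemma 5.6} itself. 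Following the deterministic scheme of \cite{DB}, Theorem 2 (or \cite{NSAZ}), what survives is a monotone remainder from which, after letting the localization $H$ increase to $\mathbf 1$, one extracts $\limsup_{n,m}\mathbb{E}\int_{Q_T}D_{n,m}\,dx\,dt\le 0$, hence \eqref{22}.

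I expect the main obstacle to be precisely the bookkeeping of these transition contributions. In contrast to the elliptic or deterministic situation, the double-index (Cauchy) setting forbids freezing one of the two functions as the limit $u$, so the crossed energies cannot be disposed of by merely letting a \emph{fixed} limit gradient $a(\nabla T_k(u))$ meet a shrinking characteristic function via dominated convergence; instead they must be killed through Lemma \ref{Lemma 5.6} after isolating a region with a definite gap $|u_n-u_m|>\delta$ (so that $Z(u_n-u_m)$ stays bounded below), and then the residual thin shell near $\{|u|=k\}$ must be absorbed by an estimate of the type $C\,\mathbb{E}|\{k-\delta\le|u_n|<k\}|+\delta\Vert u_0\Vert_{L^1}$ available from the a priori identity behind Lemma \ref{Lemma 2} because $\Phi$ is bounded. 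Getting the order of the limits right (letting $n,m\to\infty$ before $\delta\to 0$, and recovering every level from the co-countable set of $k$ with $\mathbb{E}|\{|u|=k\}|=0$), while only $L^1$ spatial regularity is available, is the technically delicate point, and is exactly the interplay between the truncation estimates and the It\^o correction highlighted in the introduction as the new phenomenon of the multiplicative case.
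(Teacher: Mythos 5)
Your proposal is correct and rests on the same two pillars as the paper's proof -- the Cauchy-type monotonicity limit \eqref{23} for the region where both truncations are inactive, and Lemma \ref{Lemma 5.6} combined with a shell energy estimate (exploiting the boundedness of $\Phi$) for the transition regions -- but it deviates in two places, both legitimately. First, for the transition region $\{|u_n|\leq k<|u_m|\}$ you split according to $|u_n-u_m|>\delta$ versus $|u_n-u_m|\leq\delta$ with $\delta$ small: the far part is killed exactly as the paper kills $J_{2,k,k'}^{n,m}$ (Lemma \ref{Lemma 5.6} with the $H_k^{\delta}$-type localization whose second derivative carries a small negative tail on a wide shell, plus the identity behind Lemma \ref{Lemma 2}), while the near-diagonal part is confined to the thin shell $\{k-\delta<|u_n|\leq k\}$ and absorbed by the estimate $\delta\Vert u_0\Vert_{L^1}+C\,\mathbb{E}|\{k-\delta<|u_n|<k\}|$, using $\mathbb{E}|\{|u|=k\}|=0$ for co-countably many $k$ and an interpolation to all levels. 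The paper instead fixes $k'>k$ and disposes of the near part $J_{1,k,k'}^{n,m}$ via the weak convergences of the $\theta_k^{k'}$-truncations from Lemma \ref{Lemma 3} $(iii),(v)$; your route avoids those entirely at the price of the exceptional-level bookkeeping you correctly flag. Second, you extract the strong convergence of $\nabla T_k(u_n)$ directly from \eqref{22} via the standard vector inequalities for $|\xi|^{p-2}\xi$ (Cauchy in $L^p$, limit identified by Lemma \ref{Lemma 3} $(ii)$), whereas the paper goes through \eqref{25}, Minty's trick to identify $\sigma_k$, norm convergence and uniform convexity; your argument is more elementary and additionally identifies $\sigma_k$ for free. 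The only caveat is that the decisive step is left as a sketch: you should make explicit that $H'$ must have compact support, so $H''$ cannot simply equal $\chi_{\{|r|<k\}}$ and the compensating negative part $-k\delta\chi_{\{k\leq|r|\leq k+1/\delta\}}$ (and the resulting wide-shell energy term, controlled by $\delta\tilde{\theta}_k^{1/\delta}(u_0)$ and the It\^{o} correction) is precisely where the multiplicative noise enters; your appeal to the scheme of \cite{DB}/\cite{NSAZ} covers this, but it is the heart of the matter rather than routine bookkeeping.
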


\begin{proof}
For $k>0$, we set 
\begin{align*}
&\int_0^T \int_D \bigg( |\nabla T_k(u_n)|^{p-2} \nabla T_k(u_n) - |\nabla T_k(u_m)|^{p-2} \nabla T_k(u_m) \bigg) \cdot  \\
&\cdot( \nabla T_k(u_n) - \nabla T_k(u_m)) \,dx\,dt \\
&= I_k^{n,m} + J_k^{n,m} + J_k^{m,n},
\end{align*}
a.s. in $\Omega$, where
\begin{align*}
I_k^{n,m} &= \int_{\{|u_n| \leq k\} \cap \{|u_m| \leq k\}} (|\nabla u_n|^{p-2} \nabla u_n - |\nabla u_m|^{p-2} \nabla u_m) \cdot \nabla (u_n - u_m) \,dx\,dt, \\
J_k^{n,m} &= \int_{\{|u_n| \leq k\} \cap \{|u_m| > k\}} |\nabla u_n|^{p-2} \nabla u_n \cdot \nabla u_n \,dx\,dt
\end{align*}
a.s. in $\Omega$. $J_k^{m,n}$ is the same as $J_k^{n,m}$ where the roles of $n$ and $m$ are reversed. Therefore these two terms can be treated simultaneously.\\
Moreover, we set 
\begin{align*}
0 \leq J_k^{n,m} = J_{1,k,k'}^{n,m} + J_{2,k,k'}^{n,m},
\end{align*}
where
\begin{align*}
J_{1,k,k'}^{n,m} &= \int_{\{|u_n| \leq k\} \cap \{|u_m| > k\} \cap \{|u_n - u_m| \leq k'\}} |\nabla u_n|^{p-2} \nabla u_n \cdot \nabla u_n \,dx\,dt,\\
J_{2,k,k'}^{n,m} &=\int_{\{|u_n| \leq k\} \cap \{|u_m| > k\} \cap \{|u_n - u_m| > k'\}} |\nabla u_n|^{p-2} \nabla u_n \cdot \nabla u_n \,dx\,dt
\end{align*}
for all $k'>k>0$, a.s. in $\Omega$. Since Lemma \ref{Lemma 3} $(iii), (iv), (v)$ and $(vi)$ hold true, the same arguments as in \cite{NSAZ} yield 
\begin{align*}
0= \lim\limits_{n,m \to \infty} \mathbb{E} I_k^{n,m} = \lim\limits_{n,m \to \infty} \mathbb{E} J_{1,k,k'}^{n,m}.
\end{align*}
The proof that
\begin{align*}
\lim\limits_{n,m \to \infty} \mathbb{E} J_{2,k,k'}^{n,m}=0
\end{align*}
is slightly different from \cite{NSAZ} and therefore we will give the proof of this equality.
We use Lemma \ref{Lemma 5.6}, \eqref{7} with $H=H^{\delta}_k$ for $\delta,k>0$ such that
\begin{align*}
(H_k^{\delta})''(r)=
\begin{cases}
1, ~&|r|<k, \\
-k \delta, ~&k \leq |r| \leq k+ \frac{1}{\delta}, \\
0, ~&|r| > k + \frac{1}{\delta}
\end{cases}
\end{align*}
and get
\begin{align*}
&\limsup\limits_{n\to \infty} \limsup\limits_{m \to \infty}  \mathbb{E}\int_{\{|u_n| \leq k\}} Z(u_n - u_m) |\nabla u_n|^p \,dx\,dt\,\\
\leq &\delta \cdot k \limsup\limits_{n\to \infty} \limsup\limits_{m \to \infty}  \mathbb{E} \int_{\{k \leq |u_n| \leq k+\frac{1}{\delta}\}} Z(u_n - u_m) |\nabla u_n|^p \,dx\,dt\, \\
\leq &\delta \cdot k \Vert Z \Vert_{\infty} \limsup\limits_{n \to \infty}  \mathbb{E} \int_{\{k \leq |u_n| \leq k+\frac{1}{\delta}\}} |\nabla u_n|^p \,dx\,dt\,.
\end{align*}
Now applying Proposition \ref{Proposition 4.2} to the equation of $u_n$ with $S= \int_0^{\cdot} \theta_{k}^{\frac{1}{\delta}} =: \tilde{\theta}_k^{\frac{1}{\delta}}$, $\psi \equiv 1$, $g=\Phi(u_n)$ and $f \equiv0$ and taking the expectation yields
\begin{align*}
&\mathbb{E} \int_D \tilde{\theta}_k^{\frac{1}{\delta}}(u_n(T)) \,dx + \mathbb{E} \int_0^T \int_D \chi_{\{k \leq |u_n| \leq k + \frac{1}{\delta}\}} |\nabla u_n|^p \,dx \,dt \\
= &\mathbb{E} \int_D \tilde{\theta}_k^{\frac{1}{\delta}}(u_0^n) \,dx + \frac{1}{2} \mathbb{E} \int_0^T \int_D \chi_{\{k \leq |u_n| \leq k + \frac{1}{\delta}\}} \Phi(u_n)^2 \,dx \, dt.
\end{align*}
The first term on the left hand side is nonnegative and the integrand of the second term on the right hand side is bounded since $\Phi$ is bounded.\\
Multiplying by $\delta$ and passing to the limit with $n \to \infty$ yields
\begin{align*}
\delta \cdot \limsup\limits_{n \to \infty}~ \mathbb{E} \int_0^T \int_D \chi_{\{k \leq |u_n| \leq k + \frac{1}{\delta}\}} |\nabla u_n|^p \,dx \,dt \leq \mathbb{E} \int_D \delta \tilde{\theta}_k^{\frac{1}{\delta}}(u_0) \,dx + \frac{1}{2} \delta \cdot C
\end{align*}
for a constant $C>0$.
We can estimate that $\delta \tilde{\theta}_k^{\frac{1}{\delta}}(u_0) \to 0$ a.e. in $\Omega \times D$ as $\delta \to 0$ and $|\delta \tilde{\theta}_k^{\frac{1}{\delta}}(u_0)| \leq u_0 + \tilde{C}$ for a constant $\tilde{C}>0$. Therefore Lebesgue's Theorem yields
\begin{align*}
\lim\limits_{\delta \to 0} \limsup\limits_{n \to \infty} \limsup\limits_{m \to \infty} \delta \cdot \mathbb{E} \int_{\{k \leq |u_n| \leq k+\frac{1}{\delta}\}} Z(u_n - u_m) |\nabla u_n|^p \,dx\,dt\, =0. 
\end{align*}
Thus we may conclude
\begin{align*}
&\lim\limits_{n,m \to \infty}  \mathbb{E}\int_{\{|u_n| \leq k\}} Z(u_n - u_m) |\nabla u_n|^p \,dx\,dt\,\\
= &\limsup\limits_{n\to \infty} \limsup\limits_{m \to \infty}  \mathbb{E}\int_{\{|u_n| \leq k\}} Z(u_n - u_m) |\nabla u_n|^p \,dx\,dt\,=0.
\end{align*}
Choosing $Z$ such that $Z(r)=1$ for $|r|\geq k'$ and $Z \geq 0$ on $\mathbb{R}$ such that $Z(0)=Z'(0)=0$, it follows
\begin{align*}
0 &\leq \lim\limits_{n,m \to \infty} \mathbb{E} J_{2,k,k'}^{n,m}\\
&= \lim\limits_{n,m \to \infty} \mathbb{E} \int_{\{|u_n| \leq k\} \cap \{|u_m| > k\} \cap \{|u_n - u_m| > k'\}} |\nabla u_n|^{p-2} \nabla u_n \cdot \nabla u_n \,dx\,dt \\
&\leq \lim\limits_{n,m \to \infty}  \mathbb{E} \int_{\{|u_n| \leq k\}} Z(u_n - u_m) |\nabla u_n|^p \,dx\,dt=0,
\end{align*}
which finally shows the validity of equality \eqref{22}.
Since equality \eqref{22} holds true, it follows that
\begin{align}\label{25}
\lim\limits_{n \to \infty} \mathbb{E} \int_0^T \int_D |\nabla T_k(u_n)|^{p-2} \nabla T_k(u_n) \cdot \nabla T_k(u_n) \,dx\,dt= \mathbb{E} \int_0^T \int_D \sigma_k \cdot \nabla T_k(u) \,dx\,dt.
\end{align}
Minty's trick yields $\sigma_k= |\nabla T_k(u)|^{p-2} \nabla T_k(u)$. We may conclude by using equality \eqref{25} that
\begin{align*}
\lim\limits_{n \to \infty} \Vert \nabla T_k(u_n) \Vert_{L^p(\Omega \times Q_T)^d}^p = \Vert \nabla T_k(u)\Vert_{L^p(\Omega \times Q_T)^d}^p.
\end{align*}
Since $L^p(\Omega \times Q_T)^d$ is uniformly convex and $\nabla T_k(u_n) \rightharpoonup \nabla T_k(u)$ in $L^p(\Omega \times Q_T)^d$ it yields
\begin{align*}
\nabla T_k(u_n) \to \nabla T_k(u) ~~~\textnormal{in} ~ L^p(\Omega \times Q_T)^d
\end{align*} 
which ends the proof of Lemma \ref{Lemma 6.3}. 
\end{proof}
For the proof of Theorem \ref{Theorem 6.2} is left to show that the energy dissipation condition $(iii)$ from Definition \ref{Definition 5.1} holds true. To this end we have to show the following lemma at first.
\begin{lem}\label{190206_lem01}
For $n\in\mathbb{N}$, let $u_n$ be a strong solution to \eqref{1} with respect to the initial value $u_0^n$. Let $u$ be defined as in Lemma \ref{Lemma 3}. Then,
\begin{align}\label{190206_01}
\lim_{k\rightarrow\infty}\limsup_{n\rightarrow\infty}\,\mathbb{E}\int_{\{k<|u_n|<k+1\}} |\nabla u_n|^p \,dx\,dt=0.
\end{align}
\end{lem}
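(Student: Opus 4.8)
The plan is to run the same truncated energy estimate that already appears in the proof of Lemma~\ref{Lemma 6.3}, specialised to the annulus $\{k<|u_n|<k+1\}$, and then to exploit the uniform $L^1$-bound on $(u_n)_n$ together with the boundedness of $\Phi$ to pass to the limit first in $n$ and then in $k$. Concretely, I would apply the It\^{o} formula of Proposition~\ref{Proposition 4.2} to the equation \eqref{20} for $u_n$ with $\psi\equiv1$, $f\equiv0$, $g=\Phi(u_n)$, $G=|\nabla u_n|^{p-2}\nabla u_n$ and
\[
S=\tilde{\theta}_k^{1}:=\int_0^{\cdot}\theta_k^{1}(r)\,dr,\qquad \theta_k^{1}=T_{k+1}-T_k,
\]
so that $S'=\theta_k^1$, $S'(0)=0$, and crucially $S''=\chi_{\{k<|\cdot|<k+1\}}$. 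With these choices the dissipation term $\int_0^t\int_D S''(u_n)|\nabla u_n|^p\,dx\,ds$ becomes exactly the annulus energy, and after setting $t=T$, taking expectations (the stochastic integral has zero mean) and discarding the nonnegative term $\mathbb{E}\int_D\tilde{\theta}_k^1(u_n(T))\,dx$, one is left with
\begin{align*}
\mathbb{E}\int_0^T\int_D\chi_{\{k<|u_n|<k+1\}}|\nabla u_n|^p\,dx\,dt \leq \mathbb{E}\int_D\tilde{\theta}_k^1(u_0^n)\,dx + \frac{1}{2}\mathbb{E}\int_0^T\int_D\chi_{\{k<|u_n|<k+1\}}\Phi(u_n)^2\,dx\,dt.
\end{align*}
This is precisely the identity used in Lemma~\ref{Lemma 6.3} with $1/\delta=1$, so no new computation is needed at this stage.

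Next I would estimate the two terms on the right uniformly in $n$. For the initial-data term, since $\tilde{\theta}_k^1$ is even and nondecreasing on $[0,\infty)$ and $|u_0^n|\leq|u_0|$, one has $0\leq\tilde{\theta}_k^1(u_0^n)\leq\tilde{\theta}_k^1(u_0)\leq|u_0|\in L^1(\Omega\times D)$, while $\tilde{\theta}_k^1(u_0)\to0$ a.e.\ as $k\to\infty$ (indeed $\tilde{\theta}_k^1(r)=0$ as soon as $k\geq|r|$); hence Lebesgue's theorem gives $\sup_n\mathbb{E}\int_D\tilde{\theta}_k^1(u_0^n)\,dx\leq\mathbb{E}\int_D\tilde{\theta}_k^1(u_0)\,dx\to0$. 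For the It\^{o} correction term I would use that $\Phi$ is bounded (the hypothesis of Theorem~\ref{Theorem 6.2}), so that the integrand is bounded by $\Vert\Phi\Vert_\infty^2\,\chi_{\{|u_n|>k\}}$, and then control the measure of the annulus by Chebyshev's inequality together with the uniform bound $\sup_n\mathbb{E}\Vert u_n\Vert_{L^1(Q_T)}<\infty$ coming from the proof of Lemma~\ref{Lemma 3}\,$(i)$: this yields $\mathbb{E}\int_0^T\int_D\chi_{\{k<|u_n|<k+1\}}\,dx\,dt\leq C/k$ uniformly in $n$, so the correction term is at most $\tfrac12\Vert\Phi\Vert_\infty^2\,C/k$. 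Combining the two bounds, taking $\limsup_{n\to\infty}$ (the right-hand side being independent of $n$) and then letting $k\to\infty$ yields \eqref{190206_01}.

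The genuinely delicate point, and the reason the boundedness of $\Phi$ is assumed in Theorem~\ref{Theorem 6.2}, is the It\^{o} correction term $\tfrac12\mathbb{E}\int_0^T\int_D\chi_{\{k<|u_n|<k+1\}}\Phi(u_n)^2$, which has no counterpart in the additive setting of \cite{NSAZ}. If one only used the Lipschitz bound $\Phi(u_n)^2\leq L^2|u_n|^2\leq L^2(k+1)^2$ on the annulus, the resulting estimate $\tfrac12 L^2(k+1)^2\,\mathbb{E}\int_0^T\int_D\chi_{\{k<|u_n|<k+1\}}$, of order $k$, would diverge; it is precisely the boundedness of $\Phi$ that decouples the size of $\Phi(u_n)^2$ from the truncation level $k$ and lets the smallness of the measure of the annulus prevail. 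Thus the whole argument hinges on trading the $\Phi(u_n)^2$-term against the uniform $L^1$-estimate via Chebyshev's inequality, which is the step I expect to require the most care.
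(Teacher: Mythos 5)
Your proof is correct and rests on the same core identity as the paper's: the truncated energy balance on the annulus $\{k<|u_n|<k+1\}$ obtained from the It\^{o} formula with $S=\tilde{\theta}_k^{1}$, which, as you note, is literally the identity already derived in the proof of Lemma~\ref{Lemma 6.3} with $\delta=1$. The differences are in the surrounding limit arguments, and they are worth recording. First, the paper does not apply Proposition~\ref{Proposition 4.2} directly with the (unbounded) $S=\tilde{\theta}_k^1$ in this lemma; it first multiplies the integrand by the cutoff $h_l$, works with the renormalized identity \eqref{190206_03} for the compactly supported $S'=h_l\,\theta_k^1$, and then lets $l\to\infty$ by dominated convergence --- a more cautious route to the same identity $J_1+J_2=J_3$ (though the paper itself uses the direct route in Lemma~\ref{Lemma 6.3}, so your shortcut is consistent with its conventions). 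Second, for the boundary terms the paper keeps the full difference $J_1=\mathbb{E}\int_D[\tilde{\theta}_k^1(u_n(t))-\tilde{\theta}_k^1(u_0^n)]\,dx$ and passes to the limit $n\to\infty$ using the convergence $u_n(t)\to u(t)$ in $L^1(\Omega\times D)$, which holds only for a.e.\ $t$ and forces a subsequence depending on $t$; your choice to simply discard the nonnegative terminal term and dominate the initial term by $\tilde{\theta}_k^1(u_0)$ via $|u_0^n|\le|u_0|$ avoids that subtlety entirely. Third, for the It\^{o} correction term the paper bounds $\limsup_n$ of the annulus measure by the measure of the enlarged annulus $\{k-1<|u|<k+2\}$ using the a.e.\ convergence $u_n\to u$, and then sends $k\to\infty$; your Chebyshev argument with the uniform bound $\sup_n\mathbb{E}\|u_n\|_{L^1(Q_T)}<\infty$ is more elementary, is uniform in $n$, and gives the quantitative rate $C/k$. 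Your closing remark on why boundedness of $\Phi$ (rather than mere Lipschitz continuity) is indispensable here matches exactly the role this hypothesis plays in the paper.
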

\begin{proof}
For fixed $l>0$, let $h_l:\mathbb{R}\rightarrow\mathbb{R}$ be defined as in Remark \ref{Remark 5.2}. We plug $S(r)=\int_0^{r} h_l(\overline{r})(T_{k+1}(\overline{r})-T_k(\overline{r}))\,d\overline{r}$ and $\psi \equiv 1$ in \eqref{190206_03} and take the expectation to obtain
\begin{align}\label{190206_04}
I_1+I_2+I_3=I_4+I_5,
\end{align}
where
\begin{align*}
I_1&=\mathbb{E}\int_D \int_{u_0^n}^{u_n(t)}h_l(r)(T_{k+1}(r)-T_k(r))\, dr\,dx,\\
I_2&=\mathbb{E}\int_{\{l<|u_n|<l+1\}}-\operatorname{sign}(u_n)(T_{k+1}(u_n)-T_k(u_n))|\nabla u_n|^p\,dx\,ds,\\
I_3&=\mathbb{E}\int_{\{k<|u_n|<k+1\}}h_l(u_n)|\nabla u_n|^p\,dx\,ds,\\
I_4&=\frac{1}{2}\mathbb{E}\int_{\{l<|u_n|<l+1\}} -\operatorname{sign}(u_n)(T_{k+1}(u_n)-T_k(u_n))\Phi(u_n)^2\,dx\,ds,\\
I_5&=\frac{1}{2}\mathbb{E}\int_{\{k<|u_n|<k+1\}}h_l(u_n)\Phi(u_n)^2\,dx\,ds
\end{align*}
for all $t\in [0,T]$.
By Lebesgue's Dominated Convergence theorem we may pass to the limit with $l\rightarrow\infty$ in \eqref{190206_04} and obtain
\begin{align}\label{190206_05}
J_1+J_2=J_3
\end{align}
where
\begin{align*}
J_1&=\mathbb{E}\int_D\int_{u_0^n}^{u_n(t)}T_{k+1}(r)-T_k(r)\, dr\,dx,\\
J_2&=\mathbb{E}\int_{\{k<|u_n|<k+1\}}|\nabla u_n|^p\,dx\,ds,\\
J_3&=\frac{1}{2} \mathbb{E}\int_{\{k<|u_n|<k+1\}}\Phi(u_n)^2\,dx\,ds.
\end{align*}
Passing to a not relabeled subsequence which may depend on $t$, Lemma \ref{Lemma 3} $(i)$ yields that $u_n(t) \rightarrow u(t)$ in $L^1(\Omega \times D)$ and $u_0^n \rightarrow u_0$ in $L^1(\Omega\times D)$ for $n\rightarrow \infty$. It follows that
\begin{align}\label{190206_08}
\lim_{k\rightarrow\infty}\lim_{n\rightarrow\infty}J_1=\lim_{k\rightarrow\infty} \int_D \int_{u_0}^{u(t)}T_{k+1}(r)-T_k(r)\, dr\,dx=0.
\end{align}
Moreover, we have
\begin{align*}
&\limsup\limits_{n \to \infty} \mathbb{E}\int_{\{k<|u_n|<k+1\}}\Phi(u_n)^2\,dx\,ds \leq \limsup\limits_{n \to \infty} \mathbb{E}\int_{\{k<|u_n|<k+1\}} C^2\,dx\,ds \\
\leq &\mathbb{E}\int_{\{k-1<|u|<k+2\}}C^2\,dx\,ds \to 0
\end{align*}
as $k \to \infty$, where $C>0$ is a constant depending on $\Phi$. Hence we get
\begin{align}\label{221}
\lim\limits_{k \to \infty} \limsup\limits_{n \to \infty} J_3 =0.
\end{align}
The nonnegativity of $J_2$, \eqref{190206_05}, \eqref{190206_08} and \eqref{221} yield
\begin{align*}
\lim\limits_{k \to \infty} \limsup\limits_{n \to \infty} J_2 =0
\end{align*}
and we may conclude \eqref{190206_01}.
\end{proof}
Now we can finalize the proof of Theorem \ref{Theorem 6.2}. We have 
\[\chi_{\{k<|u_n|<k+1\}}\chi_{\{|u|\neq k\}}\chi_{\{|u|\neq k+1\}}\rightarrow \chi_{\{k<|u|<k+1\}}\chi_{\{|u|\neq k\}}\chi_{\{|u|\neq k+1\}}\]
for $n\rightarrow\infty$ in $L^r(\Omega\times Q_T)$ for any $1\leq r<\infty$ and a.e. in $\Omega\times Q_T$. From Lemma \ref{Lemma 6.3} we recall that for any $k>0$,
\begin{align*}
\nabla T_k(u_n) \to \nabla T_k(u) ~~~\textnormal{in} ~ L^p(\Omega \times Q_T)^d
\end{align*}
for $n\rightarrow\infty$, thus, passing to a not relabeled subsequence if necessary, also a.s. in $\Omega\times Q_T$. Since $\nabla T_k(u)=0$ a.s. on $\{|u|=m\}$ for any $m\geq 0$, Fatou's Lemma yields 
\begin{align}\label{190206_02}
&\liminf_{n\rightarrow\infty}\, \mathbb{E}\int_{\{k<|u_n|<k+1\}} |\nabla u_n|^p \,dx\,dt\nonumber\\
&\geq \liminf_{n\rightarrow\infty} \mathbb{E}\,\int_{\{k<|u_n|<k+1\}} |\nabla u_n|^p\chi_{\{|u|\neq k\}}\chi_{\{|u|\neq k+1\}} \,dx\,dt\nonumber\\
&\geq\mathbb{E}\int_{\{k<|u|<k+1\}} |\nabla u|^p \chi_{\{|u|\neq k\}}\chi_{\{|u|\neq k+1\}} \,dx\,dt\nonumber\\
&=\mathbb{E}\int_{\{k<|u|<k+1\}} |\nabla u|^p \,dx\,dt
\end{align}
and the energy dissipation condition $(iii)$ follows combining Lemma \ref{190206_lem01} with \eqref{190206_02}.

\section{Uniqueness of renormalized solutions}\label{S7}
In the following, we formulate a contraction principle that yields immediately both uniqueness and continuous dependence on the initial values for renormalized solutions.
\begin{thm}\label{Theorem 7.1}
Let $u,v$ be renormalized solutions to \eqref{1} with initial data $u_0 \in L^1(\Omega \times D)$ and $v_0 \in L^1(\Omega \times D)$, respectively. Then,
\begin{align}\label{34}
\int_D \mathbb{E}|u(t) - v(t)| \,dx \leq \int_D \mathbb{E}|u_0 - v_0| \,dx
\end{align}
for all $t \in [0,T]$.
\end{thm}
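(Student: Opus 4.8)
The plan is to prove the $L^1$-contraction $\int_D \mathbb{E}|u(t)-v(t)|\,dx \leq \int_D \mathbb{E}|u_0-v_0|\,dx$ by a doubling-of-renormalization argument, mimicking the structure of the strong-solution contraction in Theorem \ref{Theorem 3.1} but now working only at the level of truncations, since neither $u$ nor $v$ enjoys $L^2$-regularity. The core idea is to apply the renormalized formulation \eqref{SPDE1} for $u$ against a renormalization of $v$ (and symmetrically), combine them through an It\^{o} product rule, and then pass successively to the limit in the renormalization/truncation parameters. Concretely, for $k>0$ I would introduce renormalizers $S_k$ (smooth approximations of $r\mapsto\int_0^r T_k(s)\,ds$, so that $S_k'$ approximates $T_k$ and has compact support) so that the generalized It\^{o} formula of Proposition \ref{Proposition 4.2} and Remark \ref{Remark 5.2} applies to both $S_k(u)$ and $S_k(v)$.

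\medskip
\textbf{Key steps.}
First I would write the renormalized equation \eqref{SPDE1} for $S_k(u)$ and for $S_k(v)$. Second, I would apply the It\^{o} product rule (Proposition \ref{itoproduct}, referenced in Lemma \ref{Lemma 5.6}) together with a coercive approximation $N_\delta$ of the absolute value (as in Theorem \ref{Theorem 3.1}, following Proposition 5 in \cite{VZ18}) to the difference, producing an identity for $\mathbb{E}\int_D N_\delta(T_k(u)-T_k(v))\,dx$ or, more precisely, for a doubled renormalization of the difference. The monotone $p$-Laplace terms, after suitable truncation, contribute a nonpositive diffusion term that I would discard, exactly as in Theorem \ref{Theorem 3.1}. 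Third, the It\^{o} correction terms and the stochastic integral must be controlled: the stochastic integral vanishes in expectation, and the It\^{o} correction $\tfrac{1}{2}\int_0^t\int_D N_\delta''(\cdot)(\Phi(u)-\Phi(v))^2\,dx\,ds$ is bounded by $\delta L^2 T|D|$ using $|N_\delta''|\leq \tfrac{2}{\delta}\chi_{\{|\cdot|\leq\delta\}}$ and the Lipschitz estimate $(A2)$, so it tends to $0$ as $\delta\to 0^+$. Fourth, I would collect the additional terms created by the renormalization, namely the terms involving $S_k''$ times $|\nabla u|^p$ or $\Phi(u)^2$ (the non-cancelling renormalization remainders), and show that these vanish in the limit $k\to\infty$ using the energy dissipation condition $(iii)$ from Definition \ref{Definition 5.1} together with the boundedness/Lipschitz properties of $\Phi$. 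Finally, passing $\delta\to 0^+$ and then $k\to\infty$, and using the weak continuity $u,v\in\mathcal{C}_w([0,T];L^1(D))$ (Remark \ref{Remark 4.2}) to recover $|u(t)-v(t)|$ from $|T_k(u(t))-T_k(v(t))|$, yields \eqref{34}.

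\medskip
\textbf{Main obstacle.}
The hard part will be the fourth step: controlling the renormalization remainder terms that arise precisely because the initial data is only in $L^1$, so that the full equation for $u-v$ is unavailable and one is forced to work with $S_k(u)$ and $S_k(v)$ whose correction terms do not cancel. Unlike the $L^2$-setting of Theorem \ref{Theorem 3.1}, here the It\^{o} product rule generates mixed terms coupling $\nabla T_k(u)$ with $\nabla T_k(v)$ on the overlap region $\{|u|\sim k\}\cap\{|v|\sim k\}$, and one must argue that the contributions from the crossover zones $\{k<|u|<k+1\}$ (and likewise for $v$) are negligible as $k\to\infty$. This is exactly where the energy dissipation condition $(iii)$ is indispensable, and where the non-cancellation of the stochastic integrals in the difference of solutions — flagged in the introduction as the principal new difficulty of the multiplicative case — must be handled by taking expectations before any pathwise supremum, as was already necessary in Theorem \ref{Theorem 3.1}. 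Careful bookkeeping of the order of limits, taking $\delta\to 0^+$ for fixed $k$ before sending $k\to\infty$, will be essential to close the estimate.
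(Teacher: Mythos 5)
Your overall architecture coincides with the paper's proof: subtract the renormalized SPDEs for $T^{\sigma}_s(u)$ and $T^{\sigma}_s(v)$ (truncations whose derivatives have compact support), apply the generalized It\^{o} formula with a coercive approximation of the absolute value (the paper's $\tfrac1k\tilde T_k$ with $k\to 0$, your $N_\delta$), discard the monotone $p$-Laplace term, kill the renormalization remainders, and only at the very end take the expectation so that the surviving stochastic integral $\int_0^t\int_D[\Phi(u)-\Phi(v)]\operatorname{sign}(u-v)\,dx\,d\beta$ disappears. The order of limits you propose (absolute-value parameter first, truncation level last) is also the paper's.

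There is, however, a concrete error in your third step, and it sits exactly at the new difficulty of the multiplicative case. The It\^{o} correction generated by applying $N_\delta$ to $T^{\sigma}_s(u)-T^{\sigma}_s(v)$ is
\begin{align*}
\tfrac12\int_0^t\int_D N_\delta''\bigl(T^{\sigma}_s(u)-T^{\sigma}_s(v)\bigr)\,\bigl(\Phi(u)(T^{\sigma}_s)'(u)-\Phi(v)(T^{\sigma}_s)'(v)\bigr)^2\,dx\,ds,
\end{align*}
not $\tfrac12\int N_\delta''(\cdot)(\Phi(u)-\Phi(v))^2$, and neither expression is bounded by $\delta L^2T|D|$: the set $\{|T^{\sigma}_s(u)-T^{\sigma}_s(v)|\le\delta\}$ contains points where $|u-v|$ is arbitrarily large (e.g.\ both $|u|,|v|>s+\sigma$), so the bound $(\Phi(u)-\Phi(v))^2\le L^2\delta^2$ used in Theorem \ref{Theorem 3.1} is simply unavailable here. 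The paper must therefore split this term ($I_6=I_{6,1}+I_{6,2}$): the part where the cutoffs agree and both solutions lie below level $s$ is indeed $O(\delta)$ by the Lipschitz argument, but the crossover part, where say $|u|>s\ge |v|$ and $|T_s(u)-T_s(v)|\le\delta$ forces $s-\delta\le|v|\le s$, is only controlled by $\tfrac{4}{\delta}\Vert\Phi\Vert_\infty^2\operatorname{meas}(\{s-\delta\le|v|\le s\})$. This does not vanish as $\delta\to 0$ for fixed $s$; it vanishes only along a well-chosen subsequence of truncation levels $s_j\to\infty$ via Lemma 6 in \cite{DBFMHR} (and uses boundedness of $\Phi$). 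The same measure-theoretic device, not the energy dissipation condition, also handles the remainders $(T^{\sigma}_s)''\Phi^2$ (the paper's $I_{5}$); condition $(iii)$ of Definition \ref{Definition 5.1} is needed only for the gradient remainders $(T^{\sigma}_s)''|\nabla u|^p$. As written, your passage $\delta\to0^+$ for fixed truncation level does not close, and the attribution of all remainder estimates to the energy dissipation condition misses the second, independent tool required.
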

\begin{proof}
This proof is inspired by the uniqueness proof in \cite{DBFMHR}. We know that $S(u)$ satisfies the SPDE
\begin{align}\label{35}
&dS(u)-\operatorname{div}\,(S'(u)|\nabla u|^{p-2}\nabla u) \,dt+S''(u)|\nabla u|^p\,dt\nonumber\\
&=\Phi S'(u) \,d\beta +\frac{1}{2}S''(u)\Phi^2(u) \,dt
\end{align}
for all $S \in C^2(\mathbb{R})$ such that $\textnormal{supp}~ S'$ compact and $S(0)=0$. Moreover, $S(v)$ satisfies an analogous SPDE. Subtracting both equalities yields
\begin{align}\label{36}
&S(u(t)) - S(v(t)) =\notag\\ 
&S(u_0) - S(v_0) + \int_0^t \textnormal{div}[S'(u) |\nabla u|^{p-2} \nabla u - S'(v)|\nabla v|^{p-2} \nabla v]\,ds  \notag \\
- &\int_0^t \left(S''(u)|\nabla u|^p  - S''(v)|\nabla v|^p\right)\,ds + \int_0^t (\Phi(u)S'(u) - \Phi(v)S'(v))  \,d\beta \\
+ &\frac{1}{2} \int_0^t (\Phi^2(u)S''(u) - \Phi^2(v)S''(v)) \,ds \notag
\end{align}
in $W^{-1,p'}(D) + L^1(D)$ for all $t \in [0,T]$, a.s. in $\Omega$.\\
Now we set $S(r):= T_s^{\sigma}(r)$ for $r \in \mathbb{R}$ and $s,\sigma >0$ and define $T_s^{\sigma}$ as follows: Firstly, we define for all $r \in \mathbb{R}$
\begin{align*}
(T_s^{\sigma})'(r)= \begin{cases}
1 ,~&\textnormal{if}~|r| \leq s, \\
\frac{1}{\sigma}(s+\sigma - |r|), ~&\textnormal{if}~ s < |r| < s + \sigma, \\
0, ~&\textnormal{if}~ |r| \geq s+ \sigma.
\end{cases}
\end{align*}
Then we set $T_s^{\sigma}(r):= \int_0^r (T_s^{\sigma})'(\tau) \,d\tau$. Furthermore we have the weak derivative
\begin{align*}
(T_s^{\sigma})''(r)= \begin{cases}
-\frac{1}{\sigma} \operatorname{sign}(r), ~&\textnormal{if}~ s < |r| < s + \sigma, \\
0, ~&\textnormal{otherwise}.
\end{cases}
\end{align*}
Applying the It\^{o} formula (see \ref{Proposition 4.2}) to equality \eqref{36} with $S(r)= \frac{1}{k} \tilde{T}_k(r)=\frac{1}{k}\int_0^r T_k(\overline{r})\,d\overline{r}$ and $\psi\equiv1$ yields 
\begin{align}\label{37}
&\int_D \bigg( \frac{1}{k} \tilde{T}_k( T_s^{\sigma}(u(t)) - T_s^{\sigma}(v(t))) - \frac{1}{k} \tilde{T}_k( T_s^{\sigma}(u_0) - T_s^{\sigma}(v_0)) \bigg) \,dx \notag \\
- &\int_0^t \langle \textnormal{div} ((T_s^{\sigma})'(u) |\nabla u|^{p-2} \nabla u - (T_s^{\sigma})'(v) |\nabla v|^{p-2} \nabla v), \frac{1}{k} T_k( T_s^{\sigma}(u) - T_s^{\sigma}(v)) \rangle \,dr \notag \\
= &\int_D \int_0^t ( - ((T_s^{\sigma})''(u) |\nabla u|^p - (T_s^{\sigma})''(v) |\nabla v|^p)\cdot \frac{1}{k} T_k (T_s^{\sigma}(u) - T_s^{\sigma}(v))\,dr \,dx\notag \\
+ &\int_D \int_0^t (\Phi(u)(T_s^{\sigma})'(u) - \Phi(v)(T_s^{\sigma})'(v)) \cdot\frac{1}{k} T_k (T_s^{\sigma}(u) - T_s^{\sigma}(v)) \,d\beta\,dx \notag\\
+& \frac{1}{2}\int_D \int_0^t (\Phi^2(u)(T_s^{\sigma})''(u) -\Phi^2(v)(T_s^{\sigma})''(v)) 
\cdot \frac{1}{k} T_k (T_s^{\sigma}(u) - T_s^{\sigma}(v)) \,dr \,dx \notag \\
+ &\frac{1}{2} \int_D \int_0^t (\Phi(u)(T_s^{\sigma})'(u) - \Phi(v)(T_s^{\sigma})'(v))^2 \cdot\frac{1}{k}  \chi_{\{|T_s^{\sigma}(u) - T_s^{\sigma}(v)|<k\}} \,dr\,dx
\end{align}
a.s. in $\Omega$ for any $t\in [0,T]$. We write equality \eqref{37} as
\begin{align*}
I_1^{\sigma,k,s} + I_2^{\sigma,k,s}= I_3^{\sigma,k,s}+ I_4^{\sigma,k,s} + I_5^{\sigma,k,s}+I_6^{\sigma,k,s}.
\end{align*}
We want to pass to the limit with $\sigma \to 0$ firstly, then we pass to the limit $k \to 0$ and finally we let $s \to \infty$.
We may repeat the arguments used in \cite{NSAZ}, proof of Theorem 7.1 for the expressions  $I_1$, $I_2$ and $I_3$ to pass to the limit in $I_1^{\sigma,k,s}$, $I_2^{\sigma,k,s}$ and $I_3^{\sigma,k,s}$ For $\omega\in \Omega$ and $t\in [0,T]$ fixed. More precisely, from \cite{NSAZ}, p.21 it follows that
\begin{align}\label{201211_01}
\lim\limits_{s \to \infty}\lim\limits_{k \to 0} \lim\limits_{\sigma \to 0} I_1 = \int_D |u(t) - v(t)| - |u_0 - v_0| \,dx
\end{align}
a.s. in $\Omega$, for all $t \in [0,T]$.
Then, repeating the arguments from \cite{NSAZ}, p.21 we get  
\begin{align}\label{201211_02}
\liminf\limits_{\sigma \to 0} I_2^{\sigma,k,s} \geq 0.
\end{align}
With the same arguments as on p.22-p.25 in \cite{NSAZ} it follows that
\begin{align}\label{201211_03}
\lim\limits_{j \to \infty} \limsup\limits_{k \to 0} \limsup\limits_{\sigma \to 0} |I_3^{\sigma,k,s_j}|=0,
\end{align}
passing to a suitable subsequence $(s_j)_{j\in\mathbb{N}}$ with $\lim_{j\rightarrow\infty}s_j=+\infty$ if necessary.\\
In the next steps, we will address $I_4^{\sigma,k,s}$, $I_5^{\sigma,k,s}$ and $I_6^{\sigma,k,s}$. Therefore we recall that for any fixed $s>0$, $T_s^{\sigma}(r)\rightarrow T_s(r)$ and $(T_s^{\sigma})'(r) \to \chi_{\{|r|\leq s\}}$ pointwise for all $r \in \mathbb{R}$ as $\sigma \to 0$. Since  $|(T_s^{\sigma})'| \leq 1$ and $|T_s^{\sigma})(r)|\leq |r|$ on $\mathbb{R}$ we have
\begin{align*}
(T_s^{\sigma})'(u) \to \chi_{\{|u|\leq s\}}
\end{align*}
in $L^1(\Omega\times Q_T)$ and a.e. in $\Omega \times Q_T$ 
as $\sigma \to 0$. An analogous result holds true for $v$ instead of $u$. Moreover, $\frac{1}{k}T_k(r)\rightarrow \operatorname{sign}(r)$ for $k\rightarrow 0$ in $\mathbb{R}$, where $\operatorname{sign}$ is the classical (single-valued) sign function. In addition, $|\frac{1}{k}T_k(r)|\leq 1$ for all $k>0$ and all $r\in \mathbb{R}$.\\
Now, we write 
\begin{align*}
I_4^{\sigma,k,s}=I_{4,1}^{\sigma,k,s}+I_{4,2}^{\sigma,k,s}
\end{align*}
where
\begin{align*}
I_{4,1}^{\sigma,k,s}&=\int_D \int_0^t \Phi(u)((T_s^{\sigma})'(u) - (T_s^{\sigma})'(v)) \cdot\frac{1}{k} T_k (T_s^{\sigma}(u) - T_s^{\sigma}(v)) \,d\beta\,dx,\\
I_{4,2}^{\sigma,k,s}&=\int_D \int_0^t (T_s^{\sigma})'(v)(\Phi(u) - \Phi(v))\cdot\frac{1}{k} T_k (T_s^{\sigma}(u) - T_s^{\sigma}(v)) \,d\beta\,dx
\end{align*}
Again, proceeding as in \cite{NSAZ}, p.25 for the term $I_4$ and using the boundedness of $\Phi$, it follows that
\begin{align*}
\lim_{s\rightarrow\infty}\lim_{k\rightarrow 0}\lim_{\sigma\rightarrow 0}I_{4,1}^{\sigma,k,s}=0
\end{align*}
a.s. in $\Omega$ for all $t\in[0,T]$. From the It\^{o} isometry and Lebesgue's dominated convergence theorem it follows that
\begin{align*}
&\lim_{\sigma\rightarrow 0}\mathbb{E}\left|I_{4,2}^{\sigma,k,s}-\int_D \int_0^t (T_s)'(v)(\Phi(u) - \Phi(v))\cdot\frac{1}{k} T_k (T_s(u) - T_s(v))\,d\beta\,dx\right|^2=\\
&\lim_{\sigma\rightarrow 0}\frac{1}{k}\mathbb{E}\int_D \int_0^t\left|[\Phi(u) - \Phi(v)](T_s^{\sigma})'(v)T_k(T_s^{\sigma}(u)-T^{\sigma}_s(v))-(T_s)'(v)T_k(T_s(u) - T_s(v))\right|^2\,dxdt\\
&=0,
\end{align*}
thus
\begin{align*}
&\lim_{\sigma\rightarrow 0} I_{4,2}^{\sigma,k,s}=\frac{1}{k}\int_D\int_0^t[\Phi(u) - \Phi(v)](T_s)'(v)\frac{1}{k}T_k(T_s(u) - T_s(v))\,d\beta \,dx
\end{align*}
in $L^2(\Omega)$ and, passing to a not relabeled subsequence if necessary, also a.s. in $\Omega$.
and similarly we obtain
\begin{align}\label{201215_01}
&\lim_{k\rightarrow 0}\int_D\int_0^t[\Phi(u) - \Phi(v)](T_s)'(v)\frac{1}{k}T_k(T_s(u) - T_s(v))\,d\beta \,dx\nonumber\\
&=\int_D\int_0^t[\Phi(u) - \Phi(v)](T_s)'(v)\operatorname{sign}(T_s(u) - T_s(v))\,d\beta \,dx
\end{align}
in $L^2(\Omega)$ and, passing to a not relabeled subsequence if necessary, also a.s. in $\Omega$. Passing to the limit with $s\rightarrow\infty$ in \eqref{201215_01}, we get
\begin{align}\label{201211_04}
\lim_{s\rightarrow\infty}\lim_{k\rightarrow 0}\lim_{\sigma\rightarrow 0}I_4^{\sigma,k,s}=\int_D\int_0^t[\Phi(u) - \Phi(v)]\operatorname{sign}(u -v)\,d\beta \,dx
\end{align}
for all $t\in[0,T]$, a.s. in $\Omega$. 
Now, we write
\begin{align*}
I_5^{\sigma,k,s}=&\frac{1}{2}\int_D \int_0^t (\Phi^2(u)(T_s^{\sigma})''(u) -\Phi^2(v)(T_s^{\sigma})''(v)) 
\cdot \frac{1}{k} T_k (T_s^{\sigma}(u) - T_s^{\sigma}(v)) \,dr \,dx \notag \\
&+\frac{1}{2}\int_D \int_0^t \Phi^2(v)((T_s^{\sigma})''(u) -(T_s^{\sigma})''(v)) 
\cdot \frac{1}{k} T_k (T_s^{\sigma}(u) - T_s^{\sigma}(v)) \,dr \,dx\notag\\
&\frac{1}{2}\int_D \int_0^t (T_s^{\sigma})''(u)(\Phi^2(u)-\Phi^2(v))\cdot \frac{1}{k} T_k (T_s^{\sigma}(u) - T_s^{\sigma}(v)) \,dr \,dx\notag\\
&:=I_{5,1}^{\sigma,k,s}+I_{5,2}^{\sigma,k,s}
\end{align*}
Using exactly the same arguments as in \cite{NSAZ}, p.25 for the expression $I_3^2$, we get that
\begin{align}\label{201211_05}
\limsup_{\sigma\rightarrow 0}I_{5,1}^{\sigma,k,s}\leq 0.
\end{align}
In the following we show that there exists a subsequence $(s_j)_{j\in\mathbb{N}}\subset\mathbb{N}$ with $\lim_{j\rightarrow\infty} s_j=+\infty$ such that
\begin{align}\label{201211_06}
\lim_{j\rightarrow\infty}\limsup_{k\rightarrow 0} \limsup_{\sigma\rightarrow 0}I_{5,2}^{\sigma,k,s_j}=0
\end{align}
We have, for any $k\in\mathbb{N}$,
\begin{align*}
|I_{5,2}^{\sigma,k,s}|&\leq \frac{1}{2}\mathbb{E}\int_D \int_0^t |(T_s^{\sigma})''(u)||\Phi^2(u)-\Phi^2(v)|
\cdot \frac{1}{k} |T_k (T_s^{\sigma}(u) - T_s^{\sigma}(v))| \,dr \,dx\notag\\
&\leq \frac{1}{\delta}\int_{\{s<|u|<s+\delta\}}\Vert\Phi\Vert_{\infty}^2\,dr\,dx
\end{align*}
and therefore the assertion follows from Lemma 6 in \cite{DBFMHR}.
Now we turn our attention towards $I^{\sigma,k,s}_6\leq I^{\sigma,k,s}_{6,1}+I^{\sigma,k,s}_{6,2}$, where
\begin{align*}
I^{\sigma,k,s}_{6,1}=\frac{1}{k}\int_D\int_0^t\Phi^2(u)((T_s)'(u)-(T_s)'(v))^2\chi_{\{|T_s(u)-T_s(v)|\leq k\}}\, dr\,dx,
\end{align*}
\begin{align*}
I^{\sigma,k,s}_{6,2}=\frac{1}{k}\int_D\int_0^t((T_s)'(u))^2(\Phi(u)-\Phi(v))^2\chi_{\{|T_s(u)-T_s(v)|\leq k\}}\, dr\,dx.
\end{align*}
With the same arguments as in \cite{NSAZ}, p.26 for $I_5$ and thanks to the boundedness of $\Phi$, we show that there exists a subsequence $(s_j)_{j\in\mathbb{N}}\subset\mathbb{N}$ with $\lim_{j\rightarrow\infty} s_j=+\infty$ such that
\begin{align}\label{201215_02}
\limsup_{j\rightarrow 0}\limsup_{k\rightarrow 0}\limsup_{\sigma\rightarrow 0}I^{\sigma,k,s}_{6,1}&\leq 0.
\end{align}
Moreover, 
\begin{align*}
\lim_{\sigma\rightarrow 0}I_{6,2}^{k,s}&=\frac{1}{k}\int_D\int_0^t ((T_s)'(v))^2(\Phi(u)-\Phi(v))^2\chi_{\{|T_s(u)-T_s(v)|\leq k\}}\, dr\,dx\nonumber\\
&\leq \frac{1}{k}\int_D\int_0^t ((T_s)'(v))^2(\Phi(u)-\Phi(v))^2\chi_{\{|T_s(u)-T_s(v)|\leq k\}}\chi_{\{|u|\leq s\}}\chi_{\{|v|\leq s\}}\, dr\,dx\nonumber\\
&+\frac{1}{k}\int_D\int_0^t((T_s)'(v))^2(\Phi(u)-\Phi(v))^2\chi_{\{|T_s(u)-T_s(v)|\leq k\}}\chi_{\{|u|> s\}}\chi_{\{|v|\leq s\}}\, dr\,dx.
\end{align*}
It is not very hard to see that the first term on the right-hand side of
above equation vanishes for $k\rightarrow 0$ a.s. in $\Omega$. Concerning the second term, we remark that for $\omega \in \Omega$ fixed, we have
\begin{align*}
&\limsup_{k\rightarrow 0}\frac{1}{k}\int_D\int_0^t((T_s)'(v))^2(\Phi(u)-\Phi(v))^2\chi_{\{|T_s(u)-T_s(v)|\leq k\}}\chi_{\{|u|>s\}}\chi_{\{|v|\leq s\}}\, dr\,dx\\
&\leq \limsup_{k\rightarrow 0}\frac{4}{k}\int_{\{s-k\leq|v|\leq s\}}\Vert\Phi\Vert_{\infty}^2\, dr\,dx.
\end{align*}
Consequently, from from Lemma 6 in \cite{DBFMHR} it follows that there exists a subsequence $(s_j)_{j\in\mathbb{N}}\subset\mathbb{N}$ with $\lim_{j\rightarrow\infty} s_j=+\infty$ such that
\begin{align}\label{201217_01}
\limsup_{j\rightarrow 0}\limsup_{k\rightarrow 0}\limsup_{\sigma\rightarrow 0}I^{\sigma,k,s}_{6,2}&\leq 0.
\end{align}
From \eqref{201211_01} - \eqref{201217_01} it follows that
\begin{align}\label{201215_03}
\int_D |u(t) - v(t)| \,dx \leq \int_D |u_0 - v_0| \,dx +\int_D\int_0^t[\Phi(u) - \Phi(v)]\operatorname{sign}(u -v)\,d\beta \,dx
\end{align}
a.s. in $\Omega$, for all $t \in [0,T]$. Taking the expectation in \eqref{201215_03}, the assertion follows.
\end{proof}

\section{Appendix: The It\^{o} product rule}

\begin{prop}\label{itoproduct}
For $1<p<\infty$, $u_0$, $v_0\in L^2(\Omega\times D)$ $\mathcal{F}_0$-measurable let $u,v\in L^p(\Omega\times(0,T);W^{1,p}_0(D))\cap L^2(\Omega;\mathcal{C}([0,T];L^2(D)))$ satisfy
\begin{align}\label{181201_03}
u(t)=u_0+\int_0^t\Delta_p(u)\,ds+\int_0^t\Phi(u)\,d\beta,
\end{align}
\begin{align}\label{201204_01}
v(t)=v_0+\int_0^t\Delta_p(v)\,ds+\int_0^t\Phi(v)\,d\beta.
\end{align}
Then, for any $H,Z\in \mathcal{C}^2_b(\mathbb{R})$ such that $Z(0)=Z'(0)=0$
\begin{align}\label{181201_04}
&(Z((u-v)(t)),H(u(t)))_2=(Z(u_0-v_0),H(u_0))_2\nonumber\\
&+\int_0^t\langle \Delta_p(u)-\Delta_p(v),H(u)Z'(u-v)\rangle_{W^{-1,p'}(D),W^{1,p}_0(D)}\, ds\nonumber\\
&+\int_0^t\langle \Delta_p(u),H'(u)Z(u-v)\rangle_{W^{-1,p'}(D),W_0^{1,p}(D)}\, ds+\int_0^t(\Phi(u) H'(u),Z(u-v))_2\,d\beta\nonumber\\
&+\frac{1}{2}\int_0^t\int_D\Phi^2(u)H''(u)Z(u-v)\, dx\,ds\nonumber\\
&+\frac{1}{2}\int_0^t\int_D(\Phi(u)-\Phi(v))^2Z''(u-v)H(u)\,ds\nonumber\\
&+\int_0^t\langle \Phi(u)-\Phi(v),Z'(u-v)H(u)\rangle\,d\beta\nonumber\\
&+\int_0^t(\Phi(u)-\Phi(v))Z'(u-v)\Phi(u)H'(u)\,ds
\end{align}
for all $t\in [0,T]$, a.s. in $\Omega$.
\end{prop}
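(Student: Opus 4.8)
The plan is to read the right-hand side of \eqref{181201_04} as a single It\^o formula for the bounded $C^2$ function of two variables
\[F(\lambda,\mu):=Z(\lambda-\mu)\,H(\lambda)\]
evaluated along the joint process $(u,v)$, both components of which are driven by the \emph{same} Brownian motion $\beta$. First I would subtract \eqref{201204_01} from \eqref{181201_03} to obtain the equation for $w:=u-v$,
\[w(t)=u_0-v_0+\int_0^t[\Delta_p(u)-\Delta_p(v)]\,ds+\int_0^t[\Phi(u)-\Phi(v)]\,d\beta,\]
so that $w$ is an It\^o process with drift $\Delta_p(u)-\Delta_p(v)\in W^{-1,p'}(D)$ and diffusion $\Phi(u)-\Phi(v)\in L^2(D)$. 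Because $Z(0)=Z'(0)=0$ and $w\in W^{1,p}_0(D)$, the Sobolev chain rule gives $Z(w),Z'(w)\in W^{1,p}_0(D)\cap L^\infty(D)$; combined with the boundedness of $H,H',H''$ this ensures that the products $H(u)Z'(w)$ and $H'(u)Z(w)$ are admissible test functions in $W^{1,p}_0(D)\cap L^\infty(D)$, which is exactly what makes the duality brackets $\langle\cdot,\cdot\rangle_{W^{-1,p'},W^{1,p}_0}$ in \eqref{181201_04} meaningful.

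Next I would expand $F(u,v)$ via the partial derivatives
\[F_\lambda=Z'(w)H(u)+Z(w)H'(u),\qquad F_\mu=-Z'(w)H(u),\]
for the first-order part and the quadratic form $\tfrac12[F_{\lambda\lambda}\Phi^2(u)+2F_{\lambda\mu}\Phi(u)\Phi(v)+F_{\mu\mu}\Phi^2(v)]$ for the It\^o correction. Pairing $F_\lambda$ with $\Delta_p(u)$ and $F_\mu$ with $\Delta_p(v)$ reproduces the two duality drift integrals, pairing $F_\lambda\Phi(u)+F_\mu\Phi(v)$ against $d\beta$ reproduces the two stochastic integrals, and a short computation of $F_{\lambda\lambda},F_{\lambda\mu},F_{\mu\mu}$ collapses the quadratic form to
\[\tfrac12 Z''(w)H(u)(\Phi(u)-\Phi(v))^2+Z'(w)H'(u)\Phi(u)(\Phi(u)-\Phi(v))+\tfrac12 Z(w)H''(u)\Phi^2(u),\]
which matches the three second-order integrals of \eqref{181201_04}. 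The structurally important point is that $u$ and $v$ carry the common driver $\beta$, so the off-diagonal term $F_{\lambda\mu}\Phi(u)\Phi(v)$ survives and produces precisely the cross-variation integral $\int_0^t\int_D(\Phi(u)-\Phi(v))Z'(w)\Phi(u)H'(u)\,dx\,ds$.

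Rigorously, I would realise this through the generalized It\^o formula of Proposition \ref{Proposition 4.2}: applying it (in its differential form, cf.\ \eqref{SPDE1}) to $H(u)$ and to $Z(w)$ separately identifies both as $W^{-1,p'}(D)+L^2(D)$-valued It\^o processes with the coefficients above, and the identity \eqref{181201_04} is then the It\^o product rule $d(A,B)_2=(dA,B)_2+(A,dB)_2+d\langle A,B\rangle$ for the $L^2(D)$-pairing of these two processes. The main obstacle is that Proposition \ref{Proposition 4.2} is formulated for a single process tested against a \emph{deterministic} smooth $\psi$, so one cannot simply feed $H(u)$ in as the test function for $Z(w)$; the delicate step is to produce the cross-variation term, which must come from the quadratic covariation of the two stochastic integrals via the It\^o isometry. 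I would handle this by an approximation in time (an implicit-Euler/Rothe scheme) or a Galerkin projection, where the classical finite-dimensional It\^o product rule applies verbatim, and then pass to the limit using the $L^2(\Omega;\mathcal{C}([0,T];L^2(D)))\cap L^p(\Omega\times(0,T);W^{1,p}_0(D))$ regularity of $u,v$ and the continuity and boundedness of $Z,H$ and their derivatives. I would stress that \emph{no} monotonicity of the $p$-Laplacian is needed here: $u$ and $v$ are already given strong solutions, and the $p$-Laplace contributions enter only linearly through fixed duality brackets, so only the convergence of these pairings---not a monotonicity argument---has to be justified in the limit.
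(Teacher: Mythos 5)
Your proposal is correct in substance and its algebraic core coincides with the paper's: the identity is indeed the two-dimensional It\^{o} formula for $F(\lambda,\mu)=Z(\lambda-\mu)H(\lambda)$ along the pair $(u,v)$ driven by the common Brownian motion, and your computation of $F_\lambda$, $F_\mu$ and of the quadratic form --- in particular the survival of the off-diagonal term, which produces the cross-variation integral $\int_0^t(\Phi(u)-\Phi(v))Z'(u-v)\Phi(u)H'(u)\,ds$ --- is exactly what the paper's nine limit identities $I_1,\dots,I_9$ encode. Where you diverge is in the regularization used to legitimize the pointwise application of It\^{o}'s formula. The paper neither discretizes in time nor projects \`a la Galerkin; it applies a family of smoothing operators $\Pi_n:W^{-1,p'}(D)+L^1(D)\to W^{1,p}_0(D)\cap C^{\infty}(\overline{D})$ (in the spirit of Fellah--Pardoux) to the equations themselves, so that $u_n=\Pi_n(u)$ and $v_n=\Pi_n(v)$ become genuine real-valued It\^{o} processes for each fixed $x\in D$; the classical two-dimensional It\^{o} formula then applies pointwise in $x$, one integrates over $D$, and passes to the limit term by term using the pointwise convergence $\Pi_n\to I$ on each of the relevant spaces, the uniform boundedness principle and the It\^{o} isometry. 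This buys a much shorter limit argument than a Rothe scheme, which would force you to recover the stochastic integrals and the quadratic-variation terms from discrete sums, and it sidesteps the Galerkin difficulty that the nonlinear images $H(u)$ and $Z(u-v)$ do not lie in a finite-dimensional subspace. Your plan is sound and your observation that no monotonicity of the $p$-Laplacian is needed is correct (the $p$-Laplace contributions pass to the limit as fixed duality pairings); if you carry the proof out, replace the time discretization by exactly this spatial mollification of the equation.
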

\begin{proof}
We fix $t\in [0,T]$. Since $u$, $v$ satisfy \eqref{201204_01} and \eqref{181201_03}, it follows that
\begin{align}\label{201204_02}
(u-v)(t)=u_0-v_0+\int_0^t \Delta_p(u)-\Delta_p(v)\,ds +\int_0^t (\Phi(u)-\Phi(v))\,d\beta
\end{align}
holds in $L^2(D)$, a.s. in $\Omega$. For $n\in\mathbb{N}$, we use the following classical regularization procedure (see, e.g., \cite{DFEP}):\\
We choose a sequence of operators $(\Pi_n)$, 
\[\Pi_n:W^{-1,p'}(D)+L^1(D)\rightarrow W^{1,p}_0(D)\cap L^{\infty}(D),~n\in\mathbb{N}\]
such that 
\begin{itemize}
\item[$i.)$] $\Pi_n(v)\in W^{1,p}_0(D)\cap C^{\infty}(\overline{D})$ for all $v\in W^{-1,p'}(D)+L^1(D)$ and all $n\in\mathbb{N}$ 
\item[$ii.)$] For any $n\in\mathbb{N}$ and any Banach space 
\[F\in \{W^{1,p}_0(D), L^2(D), L^1(D), W^{-1,p'}(D), W^{-1,p'}(D)+L^1(D)\}.\]
$\Pi_n : F \to F$ is a bounded linear operator such that 
$\lim_{n \to \infty}{\Pi_n}_{|F}=I_F$ pointwise in $F$, where $I_F$ is the identity on $F$.
\end{itemize}
Now, we set $\Phi_{u,n}:=\Pi_n(\Phi(u))$, $\Phi_{v,n}:=\Pi_n(\Phi(v))$, $u^{n}_0:=\Pi_n(u_0)$,  $v^{n}_0:=\Pi_n(v_0)$, $u_{n}:=\Pi_n(u)$, $v_{n}:=\Pi_n(v)$, $U_n:=\Pi_n(\Delta_p(u))$, $V_n:=\Pi_n(\Delta_p(v))$. Applying $\Pi_n$ on both sides of \eqref{201204_02} yields
\begin{align}\label{201204_03}
(u_{n}-v_{n})(t)=u^{n}_0-v^{n}_0+\int_0^t U_n-V_{n}\,ds+\int_0^t (\Phi_{u,n}-\Phi_{v,n})\,d\beta
\end{align}
and applying $\Pi_n$ on both sides of \eqref{181201_03} yields
\begin{align}\label{201204_04}
u_{n}(t)=u_0^{n}+\int_0^t U_{n}\,ds+\int_0^t \Phi_{u,n} \,d\beta
\end{align}
in $W^{1,p}_0(D)\cap L^2(D)\cap \mathcal{C}^{\infty}(\overline{D})$ a.s. in $\Omega$.
The pointwise It\^{o} formula in \eqref{201204_03} and \eqref{201204_04} leads to
\begin{align}\label{201204_05}
&Z(u_{n}-v_n)(t)=Z(u^{n}_0-v_0^{n})\nonumber\\
&+\int_0^t (U_n-V_{n})Z'(u_n-v_n)\,ds\int_0^t (\Phi_{u,n}-\Phi_{v,n})Z'(u_n-v_n)\,d\beta\nonumber\\
&+\frac{1}{2}\int_0^t(\Phi_{u,n}-\Phi_{v,n})^2 Z''(u_n-v_n)\,ds
\end{align}
and
\begin{align}\label{201204_005}
H(u_{n})(t)=H(u^{n}_0)+\int_0^t U_{n} H'(u_{n})\,ds
+\int_0^t\Phi_n H'(u_n)\,d\beta+\frac{1}{2}\int_0^t\Phi_{u,n}^2H''(u_{n})\, ds
\end{align}
in $D$, a.s. in $\Omega$. From \eqref{201204_05}, \eqref{201204_005} and the  product rule for It\^{o} processes, which is just an easy application of the classic two-dimensional It\^{o} formula (see, e.g., \cite{PB}, Proposition 8.1, p. 218), applied pointwise in $t$ for fixed $x\in D$ it follows that
 \begin{align}\label{201204_06}
&Z(u_{n}-v_{n})(t)H(u_{n})(t)=Z(u^{n}_0-v^{n}_0)H(u^{n}_0)\nonumber\\
 &+\int_0^t (U_{n}-V_n)Z'(u_n-v_n)H(u_n)\,ds+\int_0^t U_n H'(u_n)Z(u_n-v_n)\,ds\nonumber\\
 &+\int_0^t \Phi_{u,n} H'(u_n)Z(u_n-v_n)\,d\beta\nonumber\\
&+\frac{1}{2}\int_0^t\Phi_{u,n}^2H''(u_n)Z(u_n-v_n)\,ds+\int_0^t (\Phi_{u,n}-\Phi_{v,n})Z'(u_n-v_n)H(u_n)\,d\beta\nonumber\\
&+\frac{1}{2}\int_0^t(\Phi_{u,n}-\Phi_{v,n})^2 Z''(u_n-v_n)H(u_n)\,ds\nonumber\\
&+\int_0^t(\Phi_{u,n}-\Phi_{v,n})Z'(u_n-v_n)\Phi_{u,n}H'(u_n)\,ds
\end{align}
in $D$, a.s. in $\Omega$. Integration over $D$ in \eqref{201204_06} yields
\begin{align*}
I_1=I_2+I_3+I_4+I_5+I_6+I_7+I_8+I_9,
\end{align*}
where
\begin{align*}
I_1&=(Z((u_n-v_n)(t)),H((u_n)(t))_2, \nonumber\\
I_2&=(Z(u^{n}_0-v^{n}_0),H(u^n_0))_2, \nonumber\\
I_3&=\int_0^t\int_D(U_{n}-V_n)Z'(u_n-v_n)H(u_n)\,dx\,ds, \nonumber\\
I_4&=\int_0^t\int_D U_n H'(u_n)Z(u_n-v_n)\,dx\,ds, \nonumber\\
I_5&=\int_0^t (\Phi_{u,n} H'(u_n),Z(u_n-v_n))_2\,d\beta, \nonumber\\
I_6&=\frac{1}{2}\int_0^t\int_D\Phi_{u,n}^2 H''(u_n)Z(u_n-v_n)\,dx\,ds\\
I_7&=\int_0^t ((\Phi_{u,n}-\Phi_{v,n})Z'(u_n-v_n),H(u_n))_2\,d\beta\\
I_8&=\frac{1}{2}\int_0^t\int_D(\Phi_{u,n}-\Phi_{v,n})^2 Z''(u_n-v_n)H(u_n)\,dx\,ds\\
I_9&=\int_0^t((\Phi_{u,n}-\Phi_{v,n})Z'(u_n-v_n),\Phi_{u,n}H'(u_n))_2\,ds
\end{align*}
a.s. in $\Omega$. Repeating the arguments from \cite{NSAZ}, proof of Proposition 9.1, we show that, passing to a not relabeled subsequence if necessary,
\begin{align}\label{181212_05}
\lim_{n\rightarrow\infty}I_1=(Z((u-v)(t)),H'(u(t))_2,
\end{align}
\begin{align}\label{181212_06}
\lim_{n\rightarrow\infty}I_2=(Z(u_0-v_0),H'(u_0))_2,
\end{align}
\begin{align}\label{181218_03}
\lim_{n\rightarrow\infty}I_3=\int_0^t \langle \Delta_p(u)-\Delta_p(v) ,Z'(u-v)H(u)\rangle_{W^{-1,p'}(D),W_0^{1,p}(D)}\,ds,
\end{align}
\begin{align}\label{181212_12}
\lim_{n\rightarrow\infty}I_4=\int_0^t\langle \Delta_p(u),H'(u)Z(u-v)\rangle_{W^{-1,p'}(D),W_0^{1,p}(D)}\, ds,
\end{align}
\begin{align}\label{181212_13}
\lim_{n\rightarrow\infty} I_5=\int_0^t\int_D\Phi(u) H'(u)Z(u-v)\,dx\,d\beta,
\end{align}
\begin{align}\label{181213_01}
\lim_{n\rightarrow\infty}I_6=\frac{1}{2}\int_0^t\int_D\Phi(u)^2 H''(u)Z(u-v)\,dx\,ds
\end{align}
 a.s. in $\Omega$. Since $(\Pi_n)_n$ is a sequence of linear operators on $L^2(D)$ converging pointwise to the identity for $n\rightarrow\infty$, from the Uniform Boundedness Principle  and Lebesgues dominated convergence theorem it follows that $\Phi_{u,n}\rightarrow \Phi(u)$ and $\Phi_{v,n}\rightarrow\Phi(v)$ in $L^2(0,T;L^2(D))$ and in $L^2(\Omega;L^2(0,T;L^2(D))$ for $n\rightarrow\infty$. Using the It\^{o} isometry and passing to a not relabeled subsequence if necessary, it follows that
\begin{align}\label{201204_07}
\lim_{n\rightarrow\infty} I_7=\int_{0}^t(\Phi(u)-\Phi(v))Z'(u-v),H(u))_2\,d\beta,
\end{align}
\begin{align}\label{201204_08}
\lim_{n\rightarrow\infty} I_8=\frac{1}{2}\int_0^t\int_D(\Phi(u)-\Phi(v))^2 Z''(u-v)H(u)\,dx\,ds,
\end{align}
\begin{align}\label{201204_09}
\lim_{n\rightarrow\infty} I_9=\int_0^t(\Phi(u)-\Phi(v)Z'(u-v),\Phi(u)H'(u))_2\,ds.
\end{align}
Now, the assertion follows from \eqref{181212_05}-\eqref{201204_09}.
\end{proof}
\begin{cor}\label{cor1}
Proposition \ref{itoproduct} still holds true for $H,Z\in W^{2,\infty}(\mathbb{R})$ such that $H''$ and $Z''$ are piecewise continuous.
\end{cor}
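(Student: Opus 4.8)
The plan is to obtain Corollary \ref{cor1} from Proposition \ref{itoproduct} by approximating the pair $(H,Z)$, in the same spirit in which the It\^o formula of Proposition \ref{Proposition 4.2} is extended from smooth renormalizations to $W^{2,\infty}$ ones. Given $H,Z\in W^{2,\infty}(\mathbb{R})$ with $H'',Z''$ piecewise continuous and $Z(0)=Z'(0)=0$, I would first build approximating pairs $(H_\varepsilon,Z_\varepsilon)\subset\mathcal{C}^2_b(\mathbb{R})$, $\varepsilon>0$, with $Z_\varepsilon(0)=Z_\varepsilon'(0)=0$, obtained by smoothing $H$ and $Z$ only in shrinking neighbourhoods of the finitely many jump points of $H''$ and $Z''$ (e.g.\ by a polynomial Hermite patch matching $H,H',H''$, resp.\ $Z,Z',Z''$, at the endpoints of each such neighbourhood, using that $H,Z\in\mathcal{C}^{1,1}(\mathbb{R})$; near $0$ one imposes in addition $Z_\varepsilon(0)=Z_\varepsilon'(0)=0$). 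This construction can be arranged so that $H_\varepsilon\to H$, $H_\varepsilon'\to H'$ and $Z_\varepsilon\to Z$, $Z_\varepsilon'\to Z'$ uniformly on $\mathbb{R}$, while $H_\varepsilon''\to H''$ and $Z_\varepsilon''\to Z''$ pointwise at every continuity point and, choosing a symmetric mollifier, converging to the half-sum of the one-sided limits at each jump; moreover $\|H_\varepsilon\|_{W^{2,\infty}}+\|Z_\varepsilon\|_{W^{2,\infty}}\le C$ uniformly in $\varepsilon$. One then applies Proposition \ref{itoproduct} to each pair $(H_\varepsilon,Z_\varepsilon)$ and lets $\varepsilon\to0$ in the resulting identity \eqref{181201_04}.

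For the terms of \eqref{181201_04} not involving a second derivative — the boundary terms $I_1,I_2$, the first-order It\^o integral $I_7$, the It\^o-correction term $I_9$ and the integral $I_5$ — passing to the limit is routine: since $H,Z\in\mathcal{C}^1$, the convergences $H_\varepsilon\to H$, $H_\varepsilon'\to H'$, $Z_\varepsilon\to Z$, $Z_\varepsilon'\to Z'$ are uniform, so the integrands converge a.e.\ in $\Omega\times Q_T$ and are dominated using $u,v\in L^2$, $\Phi(u),\Phi(v)\in L^2$ and the uniform bounds on $(H_\varepsilon,Z_\varepsilon)$. For the two stochastic integrals $I_5,I_7$ I would pass to the limit by the It\^o isometry, reducing their convergence to $L^2(\Omega\times(0,t)\times D)$-convergence of the (first-order) integrands, which again follows by dominated convergence; this yields convergence in $L^2(\Omega)$ and hence, along a subsequence, a.s.\ in $\Omega$.

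The delicate point, and the main obstacle, is the passage to the limit in the four terms carrying $H''$ or $Z''$, namely $I_3,I_4$ (inside the $p$-Laplace duality pairings) and $I_6,I_8$, because $H_\varepsilon'',Z_\varepsilon''$ converge only at continuity points, i.e.\ only Lebesgue-a.e.\ on $\mathbb{R}$, and the level sets $\{u=b_j\}$, $\{u-v=a_i\}$ at the jump points $b_j$ of $H''$ and $a_i$ of $Z''$ may a priori carry positive $dP\otimes dt\otimes dx$-measure. For $I_3$ and $I_4$ I would expand the gradient of the test function, e.g.\ $\nabla\big(Z_\varepsilon'(u-v)H_\varepsilon(u)\big)=Z_\varepsilon''(u-v)\nabla(u-v)\,H_\varepsilon(u)+Z_\varepsilon'(u-v)H_\varepsilon'(u)\nabla u$, and observe that the factor $Z_\varepsilon''(u-v)$ always appears multiplied by $\nabla(u-v)$ (and $H_\varepsilon''(u)$ by $\nabla u$). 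Since $\nabla(u-v)=0$ a.e.\ on $\{u-v=a_i\}$ and $\nabla u=0$ a.e.\ on $\{u=b_j\}$ (the Stampacchia property of $W^{1,p}$-functions), these products vanish exactly on the problematic level sets and converge to the correct limit off them; together with the bound $|Z_\varepsilon''(u-v)\nabla(u-v)|\le\|Z''\|_\infty|\nabla(u-v)|\in L^p$ this gives convergence of the test functions in $W^{1,p}_0(D)$ for a.e.\ $(\omega,s)$ and then of the pairings by dominated convergence, using $|\nabla u|^{p-2}\nabla u,|\nabla v|^{p-2}\nabla v\in L^{p'}$.

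For $I_6$ and $I_8$, where $H''(u)$, resp.\ $Z''(u-v)$, is \emph{not} accompanied by a gradient, the vanishing-gradient argument is unavailable, and here the symmetric choice of mollifier is essential: with it, $H_\varepsilon''(u)\to H''(u)$ and $Z_\varepsilon''(u-v)\to Z''(u-v)$ hold $dP\otimes dt\otimes dx$-a.e., where $H''$, $Z''$ are taken in their symmetric (half-sum) representatives at jump points — which is precisely the representative under which \eqref{181201_04} is to be read for $W^{2,\infty}$ renormalizations, consistently with Proposition \ref{Proposition 4.2}. The integrands are then dominated by $\tfrac12\|H''\|_\infty\,\Phi(u)^2|Z(u-v)|$, resp.\ $\tfrac12\|Z''\|_\infty(\Phi(u)-\Phi(v))^2|H(u)|$, both in $L^1(\Omega\times(0,t)\times D)$ thanks to $\Phi(u),\Phi(v)\in L^2$ and the boundedness of $H,Z$, so Lebesgue's theorem applies. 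Collecting the limits of $I_1,\dots,I_9$ reproduces \eqref{181201_04} for $(H,Z)$ and proves the corollary.
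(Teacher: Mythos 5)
Your proposal is correct and follows essentially the same route as the paper: approximate $(H,Z)$ by $\mathcal{C}^2_b$ pairs with uniform $W^{2,\infty}$ bounds and pass to the limit in each term of \eqref{181201_04} via dominated convergence and the It\^{o} isometry. You are in fact more careful than the paper's one-line argument about the behaviour of $H''_\varepsilon$, $Z''_\varepsilon$ at the jump points (using the vanishing of $\nabla u$, $\nabla(u-v)$ on level sets for the gradient terms and a fixed representative of $H''$, $Z''$ for the It\^{o} correction terms), but this is a refinement of, not a departure from, the paper's approach.
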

\begin{proof}
There exists sequence $(H_{\delta})_{\delta>0}$, $(Z_{\delta})_{\delta>0}\subset \mathcal{C}^2_b(\mathbb{R})$ such that $\Vert H_{\delta}\Vert_{\infty}\leq \Vert H\Vert_{\infty}$, $\Vert H'_{\delta}\Vert_{\infty}\leq \Vert H'\Vert_{\infty}$, $\Vert H''_{\delta}\Vert_{\infty}\leq \Vert H''\Vert_{\infty}$ for all $\delta>0$ and $H_{\delta}\rightarrow H$, $H'_{\delta}\rightarrow H'$ uniformly on compact subsets, $H_{\delta}''\rightarrow H''$ pointwise in $\mathbb{R}$ for $\delta\rightarrow 0$ and the same results hold true for $(Z_{\delta})_{\delta>0}$. With these convergence results we are able to pass to the limit with $\delta\rightarrow 0$ in \eqref{181201_04}.
\end{proof}

\end{document}